\newtheorem{theorem}{Theorem}[section]
\newtheorem{proposition}[theorem]{Proposition}
\newtheorem{lemma}[theorem]{Lemma}
\newtheorem{corollary}[theorem]{Corollary}
\theoremstyle{definition}
\newtheorem{remark}[theorem]{Remark}
\theoremstyle{example}
\newtheorem{example}[theorem]{Example}
\theoremstyle{problem}
\begin{document}

\title[]{Functions preserving operator means}

\author[T.H. Dinh]{Trung Hoa Dinh}
\address{Department of Mathematics, Troy University, Troy, AL 36082, USA}
\email{thdinh@troy.edu}

\author[H. Osaka]{HIROYUKI OSAKA} 
\address{Department of Mathematical Sciences, Ritsumeikan
University, Kusatsu, Shiga 525-8577, Japan}
\email{osaka@se.ritsumei.ac.jp}

\author[S. Wada]{SHUHEI WADA}
\address{Department of Information and Computer Engineering,
National Institute of Technology, Kisarazu College
Kisarazu Chiba 292-0041, Japan}
\email{wada@j.kisarazu.ac.jp}

\date{\today}



\begin{abstract}
Let $\sigma$ be a non-trivial operator mean in the sense of Kubo and Ando, and let $OM_+^1$ 
the set of normalized positive operator monotone functions on $(0, \infty)$. 
In this paper, we study class of $\sigma$-subpreserving functions $f\in OM_+^1$  satisfying  
$$f(A\sigma B) \le f(A)\sigma f(B)$$
for all positive operators $A$ and $B$.
We provide some criteria for $f$ to be trivial, i.e.,   $f(t)=1$ or $f(t)=t$. 
We also establish characterizations of $\sigma$-preserving functions $f$ satisfying 
$$f(A\sigma B) = f(A)\sigma f(B)$$
for all positive operators $A$ and $B$. 
In particular, when $\lim_{t\rightarrow 0} (1\sigma t) =0$, the function $f$ preserves $\sigma$ if and only if $f$ and $1\sigma t$ are representing functions for weighted harmonic means.

\vskip 2mm

\leftline{Key words. Operator means, operator monotone functions, positive matrices, operator convexity.}

\vskip 1mm

\leftline{AMS subject classifications. 46L30, 15A45.}
\end{abstract}

\maketitle

\section{Introduction}

A real-valued function $f$ on $(0, \infty)$ is called 
operator monotone in the case when, if bounded operators 
$A$ and $B$ satisfy $0 < A \leq B$, $f(A) \leq f(B)$ holds. The two functions $f(t) = t^s$ \ $(s \in [0, 1])$  and  
$f(t) = \log t$ are well-known examples of operator monotone functions. 

In \cite{KA}, 
Kubo and Ando developed an axiomatic theory for connections and means for pairs of positive operators.
In particular, a binary operation $\sigma$ acting on a class of positive operators, $(A, B) \mapsto A\sigma B$, is 
called a connection if the following requirements are fulfilled:
\begin{enumerate}
\item[(I)] If $A \leq C$ and $B \leq D$, then $A \sigma B \leq C \sigma D$;
\item[(II)] $C(A \sigma B)C \leq (CAC) \sigma (CBC)$;
\item[(III)] If $A_n \searrow A$ and $B_n \searrow B$, then 
$A_n \sigma B_n \searrow A \sigma B$.
\end{enumerate}
Further, a mean is a connection satisfying the normalized condition:

(IV) $1 \sigma 1 = 1$.

Kubo and Ando showed that there exists an affine order-isomorphism from the class of connections 
to the class of positive operator monotone functions, which is 
given  by $\sigma \mapsto f_\sigma(t) = 1\sigma t$.

In the past few years, the theory of operator means has been intensively studied due to a vast of applications in mathematics and quantum information theory as well. Recently, Molnar and other authors obtained full description of maps preserving different types of operator means \cite{gn, molnar1, molnar2, molnar3}. For example, in  \cite{molnar1}  Molnar studied maps $\Phi: B(H)^+ \to B(H)^+$ satisfying the condition
$$
\Phi(A \sharp B) = \Phi(A) \sharp \Phi(B),
$$
where $B(H)^+$ is the set of positive invertible operators acting on some Hilbert space $H$ and $\sharp$ is the well-known geometric mean  $A\sharp B = A^{1/2}(A^{-1/2}BA^{-1/2})^{1/2}A^{1/2}$. In a consequent paper \cite{molnar2} he studied the same problem for general operator mean $\sigma$,
\begin{equation}\label{mean}
    \Phi(A  \sigma B) = \Phi(A) \sigma \Phi(B).
\end{equation}
In a recent paper \cite{gn}, Gaal and Nagy considered preserver problems related to quasi-arithmetic means of invertible positive operators. We would like to emphasise that most of works concerned maps from  $B(H)^+$ to  $B(H)^+$. Therefore, it is natural to ask the following question:

{\bf Question 1.} {\it For a fixed operator mean $\sigma$, what is the class of real-valued function $f$ satisfying 
\begin{equation}\label{33}
f(A \sigma B) = f(A) \sigma f(B)
\end{equation}
for any positive definite matrices $A$ and $B$}?

Notice that Fujii and Nakamura also considered related inequalities to (\ref{33}) a little earlier in \cite{FN}. For a positive operator monotone function $f$ on $(0, \infty)$ and an operator mean $\sigma$, they studied the following inequalities
$$
f(A \sigma B) \leq f(A) \sigma f(B) \quad \quad  (*_L)$$
and 
$$
f(A \sigma B) \geq f(A) \sigma f(B), \quad  \quad  (*_R)
$$
whenever $A$, $B$ are positive definite matrices. A function $f$ satisfying the inequality $(*_L)$ for any positive definite matrices $A$ and $B$ is called {\it $\sigma$-subpreserving}. Similarly, using inequality $(*_L)$ we may define the class of {\it$\sigma$-superpreserving functions}. A function $f$ satisfying both $(*_L)$ and $(*_R)$ is said to be {\it $\sigma$-preserving}. This definition is similar to one in (\ref{mean}) but for real-valued functions. Recall that in \cite{FN} Fujii and Nakamura showed that a non-trivial operator mean $\sigma$ is the weighted harmonic mean if and only if $\sigma$ satisfies one of $(*_L)$ or $(*_R)$ for all positive operator monotone functions $f$ and for any positive operators $A$ and $B$. In other words, they established a characterization of means satisfying one of $(*_L)$ or $(*_R)$. Therefore, it is also natural to ask the following question:

{\bf Question 2.} For a fixed operator mean $\sigma$ what is the class of operator monotone functions satisfying one of $(*_L)$ or $(*_R)$?

It is worth noting that  $\sigma$ is the arithmetic mean the class of $\sigma$-subpreserving functions coincides with the class of well-known operator convex functions. In \cite{AH01} Hiai and Ando obtained a full characterization of operator log-convex functions $f$ satisfying
$$
f\left(\frac{A +B}{2}\right) \le f(A) \sharp (B).
$$
In \cite{hoa} the first author and his co-author defined the class of operator $\sigma \tau$-convex functions for operator means $\sigma$ and $\tau$ by the inequality
$$
f(A\sigma B) \le f(A) \tau f(B)
$$
For $\tau = \sigma$ this is nothing but the class of $\sigma$-subpreserving functions.

The main aim of this paper is to study Questions 1 and 2. In the next section for a fixed operator mean $\sigma$ we study $\sigma$-subpreserving functions. More precisely, We  provide some conditions for a $\sigma$-subpreserving function $f$ to be trivial, i.e., a constant or identity. These types of functions are similar to the standard maps on $B(H)^+$ satisfying (\ref{mean}). In Section 3, we establish a full characterization of $\sigma$-preserving functions. Finally, we obtain a refinement of Fujii and Nakamura's result mentioned above.

\section{The class of $\sigma$-subpreserving functions}\label{section name subpreserving}

Let $\sigma$ be a fixed operator mean and let $OM_+^1$ the class of all normalized positive operator monotone functions on $(0, \infty)$. We define the class $OM_+^1(\sigma) \subset OM_+^1$ as follows:
$$OM_+^1(\sigma):=\{f\in OM_+^1~|~f(A\sigma B)\le f(A)\sigma f(B)\quad \text{for all } A,B> 0\}. $$
Since the arithmetic mean is the biggest mean, it is obvious that for any $\alpha \in [0, 1],$ $$OM_+^1(\sigma)\supseteq OM_+^1(\nabla_\alpha).$$
Notice that for each $f\in OM_+^1$, 
$f$ can be uniquely extended to a continuous function on $[0,\infty)$ defined by 
$f(0):=\lim_{t\rightarrow 0+} f(t)$. 

This section focuses on the class $OM_+^1(\sigma)$ of $\sigma$-subpreserving functions. Firstly, we obtain some properties of $OM_+^1(\sigma)$. And then, we provide some conditions for a function $f\in OM_+^1(\sigma)$ to be trivial.



For an operator monotone function $f$ on $(0, \infty)$, it is well-known that $f(t) \ge t$ on $(0, 1)$ and $f(t) \le t$ on $(1, \infty).$ The following lemma is a straightforward consequence of the concavity of operator monotone functions. 

\begin{lemma}\label{lemma:constant}
Let $f \in OM_+^1$. Suppose that there is a number $t_0 \in (0, \infty)$ \ $(t_0 \not= 1)$
such that $f(t_0) = 1$. Then $f(t) = 1$ on $(0,\infty)$. 
\end{lemma}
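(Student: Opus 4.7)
The plan is to leverage the fact that every $f \in OM_+^1$ is simultaneously monotone increasing and concave on $(0, \infty)$, and to use a duality argument to handle the two sides of the point $1$ symmetrically. I would assume without loss of generality that $t_0 > 1$ (the case $t_0 < 1$ is symmetric). Since $f$ is increasing with $f(1) = f(t_0) = 1$, monotonicity immediately forces $f \equiv 1$ on the closed interval $[1, t_0]$.

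Next I would push constancy out to $[t_0, \infty)$ using concavity: positive operator monotone functions on $(0, \infty)$ are concave, so chord slopes are nonincreasing as one moves to the right. The chord over $[1, t_0]$ has slope $0$, so for any $s > t_0$ the slope of the chord over $[t_0, s]$ is $\le 0$, giving $f(s) \le 1$. Combined with $f(s) \ge f(t_0) = 1$ from monotonicity, this yields $f \equiv 1$ on $[t_0, \infty)$, hence on $[1, \infty)$.

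The main obstacle is the interval $(0, 1)$: concavity and monotonicity alone do not force constancy there, since a concave increasing function could genuinely rise on $(0, 1)$ before leveling off at height $1$. To close this gap I would pass to the adjoint function $f^{\perp}(t) := 1/f(1/t)$, which is again a normalized positive operator monotone function on $(0, \infty)$ (a standard involution on $OM_+^1$). The hypothesis $f(t_0) = 1$ translates to $f^{\perp}(1/t_0) = 1$ with $1/t_0 < 1$, so the mirror version of the preceding two steps applied to $f^{\perp}$ yields $f^{\perp} \equiv 1$ on $[1/t_0, \infty)$. Unravelling the definition gives $f \equiv 1$ on $(0, t_0]$, and combining with the previous range produces $f \equiv 1$ on all of $(0, \infty)$.

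A shorter alternative route is to invoke the real analyticity of operator monotone functions on $(0, \infty)$, guaranteed by L\"owner's theorem via the Pick integral representation: once the first two steps establish $f \equiv 1$ on the nondegenerate interval between $\min(1, t_0)$ and $\max(1, t_0)$, the identity theorem for real analytic functions immediately upgrades this to $f \equiv 1$ on $(0, \infty)$, bypassing the need for the adjoint argument.
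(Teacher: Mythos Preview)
Your argument is correct. The paper does not actually write out a proof of this lemma; it simply remarks that the result ``is a straightforward consequence of the concavity of operator monotone functions'' and moves on. So there is no detailed argument in the paper to compare against.

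Your observation that monotonicity plus concavity alone pin down $f\equiv 1$ only on $[\min(1,t_0),\infty)$ is well taken: a function like $\min(t+c,1)$ shows that something beyond bare concavity is needed on the left of that interval, and your two completions --- the adjoint trick and the appeal to real analyticity via L\"owner's theorem --- both close the gap cleanly. The analyticity route is probably closest to what the authors have in mind by ``straightforward'', since it finishes in one line once $f$ is known to be constant on a nondegenerate subinterval.

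One purely notational remark: what you call $f^{\perp}(t)=1/f(1/t)$ is, in this paper's conventions, the \emph{adjoint} $f^{*}$; the symbol $f^{\perp}$ is reserved here for the dual $t\,f(t)^{-1}$. The mathematics is unaffected, but if you are aligning with the paper you should write $f^{*}$.
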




Recall that if $f:(0, \infty) \to (0, \infty)$ is operator monotone,
then its transpose $f'(t) = tf(t^{-1})$, adjoint $f^*(t) = f(t^{-1})^{-1}$, and dual $f^\perp(t) = tf(t)^{-1}$
are also operator monotone \cite{KA}. Furthermore, $f$ is said to be symmetric if $f = f'$ and $f$ is self-adjoint if $f = f^*$.
It was showed in \cite{FJ92} that if $f$ satisfies $\dfrac{df}{dt} \big |_{t=1} =\lambda$ and $f(1) = 1$, then 
$\lambda \in [0, 1]$ and the corresponding operator mean $\sigma_f$ lies between the weighted harmonic mean and the weighted arithmetic mean. 

\begin{lemma}\label{prp:condition of 0}
Let $f$ and $\Phi$ be functions in $OM_+^1\backslash \{1, t\}$ such that $f\in OM_+^1(\sigma_\Phi)$ and $\Phi^*(0)=0$. Then, $f^*(0)=0.$
\end{lemma}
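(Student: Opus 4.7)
The plan is to specialize the $\sigma_\Phi$-subpreserving condition to scalar multiples of the identity, which reduces the operator inequality to a one-variable inequality, then to push $t\to\infty$ using the hypothesis $\Phi^*(0)=0$, and finally to rule out a finite limit by combining the operator-monotone upper bound $\Phi(t)\le t$ on $[1,\infty)$ with the concavity (and ultimately analyticity) of $\Phi$.

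First, I would choose $A=I$ and $B=tI$ in the defining inequality $f(A\sigma_\Phi B)\le f(A)\sigma_\Phi f(B)$. Since the representing function of $\sigma_\Phi$ is $\Phi$, one has $I\sigma_\Phi (tI)=\Phi(t)I$, while $f(I)=I$ and $f(tI)=f(t)I$, so the inequality collapses to the scalar inequality
$$
f(\Phi(t))\;\le\;\Phi(f(t))\qquad (t>0).
$$
The hypothesis $\Phi^*(0)=0$, via the definition $\Phi^*(t)=\Phi(1/t)^{-1}$, is equivalent to $\Phi(t)\to\infty$ as $t\to\infty$. Setting $M:=\lim_{t\to\infty}f(t)\in[1,\infty]$ (the limit exists because $f$ is non-decreasing), the continuity of $\Phi$ on $(0,\infty)$ together with $f(\Phi(t))\to M$ yields, in the limit, $M\le \Phi(M)$. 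Since $f^*(0)=0$ is equivalent to $M=\infty$, the remaining task is to exclude $M<\infty$.

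Assume for contradiction $M<\infty$. If $M=1$, then $f(t)=1$ for every $t\ge 1$, so picking any $t_0>1$ and invoking Lemma \ref{lemma:constant} forces $f\equiv 1$, contradicting $f\ne 1$. Hence $M>1$. Any operator monotone $\Phi$ with $\Phi(1)=1$ satisfies $\Phi(M)\le M$, and combined with $M\le \Phi(M)$ this forces $\Phi(M)=M$. Since $\Phi$ is concave on $(0,\infty)$, the chord joining $(1,1)$ and $(M,M)$ is the line $y=t$, so concavity gives $\Phi(t)\ge t$ on $[1,M]$; together with $\Phi(t)\le t$ on $[1,\infty)$ this yields $\Phi(t)=t$ on $[1,M]$. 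Operator monotone functions on $(0,\infty)$ are real-analytic (they extend analytically to the upper half-plane by L\"owner's theorem), so this identity propagates to all of $(0,\infty)$, contradicting $\Phi\ne t$.

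The main obstacle is this last propagation step: the scalar reduction and the passage to the limit are routine, whereas the genuinely delicate point is that a second fixed point $\Phi(M)=M$ beyond $t=1$ collapses $\Phi$ to the identity. The cleanest handling is via analyticity of operator monotone functions; alternatively one can compare secant slopes on either side of $[1,M]$ to first force $\Phi(t)=t$ on $(0,M]$, and then push the concavity argument on $[M,\infty)$ using $\Phi(t)\le t$.
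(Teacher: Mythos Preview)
Your argument is correct and is essentially the paper's proof in dual coordinates: the paper passes to the adjoints and evaluates the reversed scalar inequality at $0$ to obtain $\Phi^*(f^*(0))\le f^*(0)$, while you work with $f,\Phi$ and send $t\to\infty$ to obtain $M\le\Phi(M)$; both routes produce a fixed point $\ne 1$ for a non-trivial member of $OM_+^1$ and then force triviality. The only difference is that you make the last implication explicit via concavity and analyticity, whereas the paper leaves it implicit (and your aside about avoiding analyticity by secant comparisons would still need some special property of operator monotone functions beyond concavity to propagate the identity past $M$).
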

\begin{proof}
Assume, to the contrary, that $f^*(0) > 0$. 
Then we have
\begin{equation}\label{343}
\Phi^*(f^*(0))=1\sigma_{\Phi^*}f^*(0)=f^*(1)\sigma_{\Phi^*}f^*(0)
\le
f^*(1\sigma_{\Phi^*} 0)=
f^*(\Phi^*(0))=
f^*(0).
\end{equation}
Since $0<f^*(0)\le f^*(1)=1$ and $\Phi^*\not\in \{1,t\}$, from (\ref{343}) it  implies that $\Phi^*(f^*(0)) = f^*(0)$. Consequently, $f^*(0)=1 = f^*(1)$, hence $f^*(t)=1$ which is a contradiction.
\end{proof}
\begin{remark}\label{prp:condition of 0 at Phi}

Using a similar argument in the proof of Lemma \ref{prp:condition of 0} one can see that for $f$ and $\Phi \in OM_+^1\backslash \{1, t\}$ and $f \in OM_+^1(\sigma_\Phi)$, if $f(0) = 0$, then  $\Phi(0) = 0$. 
\end{remark}

\begin{lemma}\label{key lemma1}
Let $f, \Phi \in OM_+^1$ such that $\Phi(0)=f(0) = 0$. Let 
$$
P=\begin{pmatrix}
1 & 0\\
0 & 0 \\
 \end{pmatrix}
,\ 
Q= \begin{pmatrix}
x + y & x - y \\
x - y & x + y\\
 \end{pmatrix}, \quad x, y >0. $$
Then,
$$f(P \sigma_{\Phi'}Q)=f\left(
\Phi'\left(
\frac{4xy}{x + y}\right)
\right)
P,
$$
and 
$$f(P) \sigma_{\Phi'}f (Q) = \Phi'\left(\frac{2f(2x)f(2y)}{f(2x) + f(2y)}\right)
P,
$$
where $\Phi'$ is the transpose of $\Phi$. 
\end{lemma}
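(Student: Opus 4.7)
The plan is to exploit the transposition identity for Kubo--Ando means, namely $A \sigma_{\Phi'} B = B \sigma_\Phi A$. This holds because the transpose $\sigma'$ of a connection $\sigma$ is defined by $A \sigma' B = B \sigma A$, and its representing function is exactly $f_{\sigma'}(t) = t f_\sigma(1/t) = f_\sigma'(t)$. Swapping arguments is essential: it puts the invertible matrix $Q$ (eigenvalues $2x, 2y > 0$) on the left and the rank-one projection $P$ on the right, so the Kubo--Ando formula
$$Q \sigma_\Phi P = Q^{1/2}\, \Phi\!\bigl(Q^{-1/2} P Q^{-1/2}\bigr)\, Q^{1/2}$$
applies without any regularization of the singular $P$.

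Since $P = e_1 e_1^{*}$, the matrix $Q^{-1/2} P Q^{-1/2} = vv^{*}$ with $v = Q^{-1/2} e_1$ is rank one with single nonzero eigenvalue $c := \|v\|^2 = (Q^{-1})_{11}$. The hypothesis $\Phi(0) = 0$ then gives, via the spectral decomposition of a rank-one operator,
$$\Phi(vv^{*}) = \frac{\Phi(c)}{c}\, vv^{*}.$$
Sandwiching by $Q^{1/2}$ and using $Q^{1/2} v = e_1$ collapses the expression to
$$Q \sigma_\Phi P = \frac{\Phi(c)}{c}\, P = \Phi'\!\Bigl(\tfrac{1}{c}\Bigr)\, P,$$
where the last equality is just the definition $\Phi'(t) = t\Phi(1/t)$. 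A short computation of $Q^{-1}$ (using $\det Q = 4xy$) yields $(Q^{-1})_{11} = (x+y)/(4xy)$, so $1/c = 4xy/(x+y)$. To conclude the first identity it remains to apply $f$; since $\lambda P$ has spectrum $\{\lambda, 0\}$ and $f(0) = 0$, the functional calculus gives $f(\lambda P) = f(\lambda) P$, producing the stated formula.

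For the second identity, $f(P) = P$ follows at once from $f(1) = 1$ and $f(0) = 0$. To handle $f(Q)$, diagonalize $Q = U\, \diag(2x, 2y)\, U^{*}$ with the Hadamard-type $U = \tfrac{1}{\sqrt{2}}\bigl(\begin{smallmatrix} 1 & 1 \\ 1 & -1 \end{smallmatrix}\bigr)$, so that
$$f(Q) = \tfrac12 \begin{pmatrix} f(2x)+f(2y) & f(2x)-f(2y) \\ f(2x)-f(2y) & f(2x)+f(2y) \end{pmatrix},$$
which has the same symmetric form as $Q$ with $(2x, 2y)$ replaced by $(f(2x), f(2y))$. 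Because $f \in OM_+^1$ with $f(0) = 0$ forces $f > 0$ on $(0,\infty)$ (operator monotone functions cannot vanish on a subinterval without vanishing identically, contradicting $f(1) = 1$), the matrix $f(Q)$ is invertible. Repeating the rank-one computation for the pair $(P, f(Q))$ gives $P \sigma_{\Phi'} f(Q) = \Phi'(1/c')\, P$ where $c' = (f(Q)^{-1})_{11} = (f(2x)+f(2y))/(2 f(2x) f(2y))$, matching the claimed formula.

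The only real obstacle is the singularity of $P$: a direct application of Kubo--Ando to $P \sigma_{\Phi'} Q$ requires an $\varepsilon$-regularization that is awkward to track. The transposition trick is the key move, because it shifts $P$ into the rank-one slot inside $\Phi$, where the hypothesis $\Phi(0) = 0$ produces a clean closed-form expression rather than a limit.
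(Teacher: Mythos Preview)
Your proof is correct and rests on the same two ideas as the paper's: the transposition identity $P\,\sigma_{\Phi'}\,Q = Q\,\sigma_\Phi\,P$ (so the invertible factor goes on the left), followed by evaluating $\Phi$ on a rank-one operator using $\Phi(0)=0$. The execution differs slightly. The paper first conjugates by the Hadamard unitary $U=\tfrac{1}{\sqrt 2}\bigl(\begin{smallmatrix}1&1\\1&-1\end{smallmatrix}\bigr)$ to diagonalize $Q$, then applies the transpose and computes a chain of explicit $2\times 2$ matrix products before collapsing to a scalar multiple of $P$. You skip the diagonalization entirely by observing that $Q^{-1/2}PQ^{-1/2}=vv^{*}$ has nonzero eigenvalue $(Q^{-1})_{11}$ and that $Q^{1/2}v=e_1$, so only the single scalar $(Q^{-1})_{11}=(x+y)/(4xy)$ needs to be computed. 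Your route is therefore shorter and coordinate-free in spirit; the paper's route, by contrast, makes the spectral structure of $Q$ visible from the outset, which some readers may find more concrete. Either way the mathematics is the same.
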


\begin{proof} We prove the first identity, the second one can be obtained similarly. Let
$$
U= \frac{1}{\sqrt{2}} \begin{pmatrix}
1 & 1\\
1 & -1 \\
 \end{pmatrix}.
$$ It is obvious that $U$ is Hermitian and unitary matrix. Then we have
\begin{align*}
P \sigma_{\Phi'} Q 
&= UU(P \sigma_{\Phi'} Q)UU\\
&= U(UPU \sigma_{\Phi'} UQU)U\\
&= U\left\{
\left(
\frac{1}{2}
\begin{pmatrix}
1 & 1\\
1 & 1 \\
 \end{pmatrix}
\right)
\sigma_{\Phi'}
\begin{pmatrix}
2y & 0\\
0 & 2x \\
 \end{pmatrix}
\right\}U
\\
&= 
U\left\{
\begin{pmatrix}
2y & 0\\
0 & 2x \\
 \end{pmatrix}
\sigma_{\Phi}
\left( 
\frac{1}{2}\begin{pmatrix}
1 & 1\\
1 & 1 \\
 \end{pmatrix}
\right)
\right\}U
\\
&= U
\begin{pmatrix}
2y & 0\\
0 & 2x \\
 \end{pmatrix}^{1\over 2}
\Phi\left(
\begin{pmatrix}
2y & 0\\
0 & 2x \\
 \end{pmatrix}^{-1\over 2}
\left(
\frac{1}{2}\begin{pmatrix}
1 & 1\\
1 & 1 \\
 \end{pmatrix}
\right)
\begin{pmatrix}
2y & 0\\
0 & 2x \\
 \end{pmatrix}^{-1 \over 2}
\right)
\begin{pmatrix}
2y & 0\\
0 & 2x \\
 \end{pmatrix}^{1\over 2}
U
\\
&= U
\begin{pmatrix}
\sqrt{2y} & 0\\
0 & \sqrt{2x} \\
 \end{pmatrix}
\Phi\left(\frac{1}{4}
\begin{pmatrix}
\frac{1}{y}&\frac{1}{\sqrt{xy}}\\
\frac{1}{\sqrt{xy}}&\frac{1}{x}
\end{pmatrix}
\right)
\begin{pmatrix}
\sqrt{2y}&0\\
0&\sqrt{2x}\\
\end{pmatrix}
U\\
&=
 U
\begin{pmatrix}
\sqrt{2y} & 0\\
0 & \sqrt{2x} \\
 \end{pmatrix}
\Phi \left( {1\over 4}
{{x+y}\over {xy}}
{{xy}\over {x+y}}
\begin{pmatrix}
{1\over {y}} & {1\over {\sqrt{xy}}} \\
{1\over {\sqrt{xy}}} & {1\over {x}} 
\end{pmatrix}
\right)
\begin{pmatrix}
\sqrt{2y}&0\\
0&\sqrt{2x}\\
\end{pmatrix}
U
\\
&=
{1\over 4}
 U
\begin{pmatrix}
\sqrt{2y} & 0\\
0 & \sqrt{2x} \\
 \end{pmatrix}\left(
\Phi \left({1\over 4} {{x+y}\over {xy}}
\right)
{{4xy}\over {x+y}}
\begin{pmatrix}
{1\over {y}} & {1\over {\sqrt{xy}}} \\
{1\over {\sqrt{xy}}} & {1\over {x}} 
\end{pmatrix} 
\right)
\begin{pmatrix}
\sqrt{2y}&0\\
0&\sqrt{2x}\\
\end{pmatrix}
U\\
&=
\Phi' \left(
{{4xy}\over {x+y}}
\right)
\begin{pmatrix}
1&0\\
0&0\\
\end{pmatrix}.
\end{align*}
Consequently, 
$$
f(P \sigma_{\Phi'} Q) 
= f\left(\Phi'\left(\frac{4xy}{x + y}\right)\right)
P.
$$

\end{proof}

Now we are ready to prove the main result of this section.   
\begin{theorem}\label{prp:operator inequality}
Let $f \in OM_+^{1}$ and $\Phi\in OM_+^1\backslash \{1,t\}$ such that $\Phi^*(0) = 0$. If $f \in OM_+^{1}(\sigma_{\Phi})$ and $f(0)=0$, then $f(t) = t$ for all $t \in [0, \infty)$.
\end{theorem}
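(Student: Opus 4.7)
The strategy follows the pattern of Lemma~\ref{prp:condition of 0}: produce a non-trivial fixed point of $f$ and then deduce $f(t) = t$ on all of $(0, \infty)$ from concavity and real-analyticity of operator monotone functions. We may assume $f \neq t$ (otherwise the conclusion is immediate), and since $f(0) = 0$ forces $f \neq 1$ we have $f \in OM_+^1 \setminus \{1, t\}$. Remark~\ref{prp:condition of 0 at Phi} then yields $\Phi(0) = 0$, Lemma~\ref{prp:condition of 0} yields $f^*(0) = 0$, and interchanging $A, B$ in the subpreserving inequality (using $A \sigma_\Phi B = B \sigma_{\Phi'} A$) shows $f \in OM_+^1(\sigma_{\Phi'})$.

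Applying Lemma~\ref{key lemma1} to the mean $\sigma_{\Phi'}$ and combining its two equalities with the $\sigma_{\Phi'}$-subpreserving property of $f$ yields the scalar inequality
\begin{equation*}
f\!\left(\Phi'\!\left(\frac{4xy}{x+y}\right)\right) \le \Phi'\!\left(\frac{2 f(2x) f(2y)}{f(2x) + f(2y)}\right), \qquad x, y > 0.
\end{equation*}
The decisive step is to let $y \to 0^+$: both arguments tend to $0$ (using $f(0) = 0$), so by continuity $f(\Phi'(0)) \le \Phi'(0)$. Since $\Phi \neq t$, we have $\Phi'(0) \in [0, 1)$, because $\Phi'(0) = 1$ would force $\Phi' \equiv 1$ on $[0, 1]$ by concavity, hence on $(0, \infty)$ by real-analyticity, contradicting $\Phi \neq t$.

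If $\Phi'(0) > 0$, concavity of $f$ with $f(0) = 0$ and $f(1) = 1$ gives $f(t) \ge t$ on $[0, 1]$, so $f(\Phi'(0)) \ge \Phi'(0)$; combined with the reverse inequality, $f(\Phi'(0)) = \Phi'(0)$ is a fixed point of $f$ in $(0, 1)$. Since $t \mapsto f(t)/t$ is non-increasing on $(0, \infty)$ and equals $1$ at both $\Phi'(0)$ and $1$, it equals $1$ throughout $[\Phi'(0), 1]$, giving $f(t) = t$ on that interval. Real-analyticity of operator monotone functions then forces $f(t) = t$ on all of $(0, \infty)$.

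The residual case $\Phi'(0) = 0$ is the main obstacle, for then the above limit yields only $0 \le 0$. To treat it, I would perform the analogous $2 \times 2$ computation with $\sigma_\Phi$ in place of $\sigma_{\Phi'}$: the hypothesis $\Phi'(0) = 0$ forces $P \sigma_\Phi B = \Phi(\det B / B_{22})\, P$ for every positive $B$ (write $P \sigma_\Phi B = B \sigma_{\Phi'} P$, diagonalize the rank-one matrix $B^{-1/2} P B^{-1/2}$, and apply $\Phi'$), and taking $B = Q$ as in Lemma~\ref{key lemma1} produces the companion inequality
\begin{equation*}
f\!\left(\Phi\!\left(\frac{4xy}{x+y}\right)\right) \le \Phi\!\left(\frac{2 f(2x) f(2y)}{f(2x) + f(2y)}\right).
\end{equation*}
Combining this with the earlier inequality, and exploiting the identity $\Phi'(t)\, \Phi^*(t) = t$ together with the hypothesis $\Phi^*(0) = 0$, should again produce a non-trivial fixed point of $f$, concluding the proof as in the previous paragraph.
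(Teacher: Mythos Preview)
Your argument is correct and complete in the case $\Phi'(0)>0$, but the residual case $\Phi'(0)=0$ is genuine and your treatment of it does not go through. For instance, the geometric mean $\Phi(t)=\sqrt{t}$ satisfies $\Phi(0)=\Phi'(0)=\Phi^*(0)=0$, so it falls squarely into this case. Your companion inequality with $\Phi$ in place of $\Phi'$ suffers exactly the same degeneracy: sending $y\to 0^+$ yields $f(\Phi(0))\le\Phi(0)$, i.e.\ $0\le 0$, while sending $y\to\infty$ (using $f^*(0)=0$, so $f(2y)\to\infty$) gives $f(\Phi(4x))\le\Phi(2f(2x))$, which at $x=1/4$ reads $1\le\Phi(f'(2))$; since $f'(2)\ge 1$ and $\Phi(1)=1$, this holds for every $f\in OM_+^1$ and yields no fixed point. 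The appeal to $\Phi'(t)\Phi^*(t)=t$ is not developed into an actual argument.

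The paper avoids the case split altogether by passing to adjoints. From $f(A\sigma_\Phi B)\le f(A)\sigma_\Phi f(B)$ one obtains the \emph{reversed} inequality
\[
f^*(A\sigma_{\Phi^*}B)\ge f^*(A)\sigma_{\Phi^*}f^*(B),
\]
and then applies Lemma~\ref{key lemma1} to the pair $(f^*,\Phi^*)$ (legitimate since $\Phi^*(0)=0$ by hypothesis and $f^*(0)=0$ by Lemma~\ref{prp:condition of 0}). Sending $y\to\infty$ now gives $f^*({\Phi^*}'(4x))\ge{\Phi^*}'(2f^*(2x))$; evaluating at $x=1/4$ and using ${f^*}'(2)=2f^*(1/2)\ge 1$ traps ${\Phi^*}'({f^*}'(2))$ between $1$ and $1$, forcing ${f^*}'(2)=1$ and hence $f(t)=t$. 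The point is that the direction of the inequality, once reversed by the adjoint, makes the limit productive rather than vacuous; this is the idea your proposal is missing.
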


\begin{proof}
Firstly, notice that from the definition of the adjoint means, we have 
\begin{equation}\label{434}
    A^{-1} \sigma_\Phi B^{-1} = A \sigma_\Phi^*B.
\end{equation}
Since $f \in OM_+^{1}(\sigma_{\Phi})$, for any positive definite matrices $A$ and $B$ we have
$$f(A^{-1}\sigma_\Phi B^{-1})\le f(A^{-1}) \sigma_\Phi f(B^{-1}).$$ Using the property (\ref{434}) one can see that the last inequality is equivalent to the following
\begin{equation}\label{ineq222}
    f^*(A\sigma_{\Phi^*} B)\ge f^*(A) \sigma_{\Phi^*} f^*(B).
\end{equation}
By a standard limit process and the continuity of $f^*$ and $\Phi^*$, it is obvious that (\ref{ineq222}) is still true for positive semidefinite matrices $A$ and $B$. On the other hand, from the assumption that $f \in OM_+^{1}(\sigma_{\Phi})$ and $\Phi^*(0) = 0$, by Lemma~\ref{prp:condition of 0}, it follows that $f^*(0) = 0$. Applying Lemma~\ref{key lemma1} for the functions $f^*$ and $\Phi^*$, on account of (\ref{ineq222}) we get 
\begin{equation}\label{545}
f^*\left({\Phi^*}'\left(\frac{4xy}{x+y}\right)\right) \geq {\Phi^*}'\left(\frac{2f^*(2x)f^*(2y)}{f^*(2x) + f^*(2y)}\right).
\end{equation}
Since $f(0)=0$, 
$$\lim_{y \to \infty}f^*(2y) = \lim_{y \to \infty}f^{-1}((2y)^{-1})=\infty.$$ Then, tending $y$ to $\infty$, from (\ref{545}) we obtain $$f^*({\Phi^*}'(4x)) \geq {\Phi^*}'(2f^*(2x)).$$ Consequently, for  $x = \frac{1}{4}$, 
$$1 = f^*(1) \ge {\Phi^*}'\left(2f^*\left(\frac{1}{2}\right)\right)
= {\Phi^*}'({f^*}'(2)) \geq {\Phi^*}'({f^*}'(1)) = 1.$$
Therefore, ${\Phi^*}'({f^*}'(2)) = 1$. Since ${\Phi^*}'$ is not a constant, it implies that ${f^*}'(2) = 1$, and hence, 
 ${f^*}'(t)=1$ for all $t > 0$. Thus, $f(t) = t$ for all $t \ge 0.$
\end{proof}

The Ando-Hiai log-majorization theorem \cite{ando} states that for $A, B \ge 0$ and $r \ge 1$,
$$
A^r \sharp_\alpha B^r\preceq_{\log} (A^r \sharp_\alpha B^r).
$$
It turns out that we can only compare matrices $(A\# B )^r $ and $A^r\#B^r$ with respect to the Loewner order for some special values of $r$. 
\begin{corollary}
Let $r\in {\mathbb R}$ and $\alpha\in (0,1)$. If 
$$(A\#_\alpha  B )^r \le A^r\#_\alpha B^r$$
for all $A,B>0$, then $r \in\{-1, 0, 1\}$. 
\end{corollary}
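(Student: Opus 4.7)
The values $r\in\{-1,0,1\}$ are trivial: $r=0$ and $r=1$ give equality, and $r=-1$ does too because $(A\#_\alpha B)^{-1}=A^{-1}\#_\alpha B^{-1}$ (the weighted geometric mean is self-adjoint). The plan to exclude every other value of $r$ is to route each case into Theorem~\ref{prp:operator inequality} applied with $\Phi(t)=t^\alpha$; here $\Phi^*(t)=t^\alpha$, so $\Phi^*(0)=0$ and $\Phi\in OM_+^1\setminus\{1,t\}$ since $\alpha\in(0,1)$, while $\sigma_\Phi=\#_\alpha$.

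The direct case is $r\in(0,1)$. Then $f(t)=t^r\in OM_+^1$ with $f(0)=0$, and the hypothesis reads exactly $f\in OM_+^1(\sigma_\Phi)$. Theorem~\ref{prp:operator inequality} yields $f(t)=t$, contradicting $r<1$.

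The remaining cases I would reduce to a superpreserving version of this situation. For $r>1$, set $s=1/r\in(0,1)$, apply the operator monotone function $t\mapsto t^s$ to both sides of the hypothesis, and substitute $A\to A^s$, $B\to B^s$ to obtain $A^s\#_\alpha B^s\le (A\#_\alpha B)^s$; so $f(t)=t^s$ is $\#_\alpha$-\emph{super}preserving. For $r<0$ with $r\ne -1$, taking reciprocals of both sides of the hypothesis and using self-adjointness yields $(A\#_\alpha B)^{-r}\ge A^{-r}\#_\alpha B^{-r}$; either $-r\in(0,1)$ so that $t^{-r}$ is superpreserving directly, or $-r>1$ and one further application of $t\mapsto t^{1/(-r)}$ with the analogous substitution gives a superpreserving power $t^u$ with $u\in(0,1)$.

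The crucial step is then to convert this superpreserving inequality into a subpreserving one. I would apply the inequality $f(A)\#_\alpha f(B)\le f(A\#_\alpha B)$ to the pair $A^{-1},B^{-1}$, take the inverse of both sides, and use $(X\#_\alpha Y)^{-1}=X^{-1}\#_\alpha Y^{-1}$ together with $f^*(t)=(t^u)^*=t^u=f(t)$; this yields $f(A\#_\alpha B)\le f(A)\#_\alpha f(B)$, so $f\in OM_+^1(\sigma_\Phi)$. Since $f(0)=0$, Theorem~\ref{prp:operator inequality} forces $f(t)=t$, contradicting $u\in(0,1)$. The only real obstacle is bookkeeping: keeping the direction of the inequality straight through the exponentiations, inversions and substitutions, and verifying at each step that the operator monotone maps used and the self-adjointness of $\#_\alpha$ combine correctly.
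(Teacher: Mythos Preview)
Your argument is correct, and the minor slip in the $-r>1$ subcase (where the reduction actually yields a \emph{sub}preserving power $t^{1/(-r)}$, not a superpreserving one) only shortens that branch, since Theorem~\ref{prp:operator inequality} applies directly without the final conversion step.

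The paper's proof uses essentially the same ingredient---self-adjointness of $\#_\alpha$---but deploys it once at the outset rather than case by case. Applying the hypothesis to the pair $A^{-1},B^{-1}$ and inverting immediately gives the reverse inequality $A^r\#_\alpha B^r\le (A\#_\alpha B)^r$, so in fact $(A\#_\alpha B)^r=A^r\#_\alpha B^r$ for all $A,B>0$. Equality then persists under $r\mapsto -r$ and $r\mapsto 1/r$, so one may assume $r\in(0,1]$ and invoke Theorem~\ref{prp:operator inequality} once. This avoids the case split and the separate superpreserving-to-subpreserving conversion; your approach is more hands-on but reaches the same conclusion, and your ``crucial step'' is precisely the observation the paper front-loads.
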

\begin{proof}
Suppose $r\not=0$. For $A, B > 0$, applying the assumption for $A^{-1}$ and $B^{-1}$ we get
$$ 
(A^{-1}\sharp_\alpha B^{-1})^r \leq (A^{-1})^r \sharp_\alpha (B^{-1})^r,
$$
or,
$$
\left\{(A^{-1})^r\sharp_\alpha (B^{-1})^r\right\}^{-1} \leq \left\{(A^{-1} \sharp_\alpha B^{-1})^r\right\}^{-1}.
$$
Consequently,
$$
A^r \sharp_\alpha B^r \leq (A\sharp_\alpha B)^r.
$$
Therefore, from the assumption and the last inequality it implies that  
$$(A\#_\alpha  B )^r = A^r\#_\alpha B^r,  \quad (A\#_\alpha  B )^{-r} = A^{-r}\#_\alpha B^{-r} \quad \text{ and } \quad  
(A\#_\alpha  B )^{1\over r} = A^{1\over r}\#_\alpha B^{1\over r}.$$
Thus, it is sufficient to show the case if $r\in (0,1]$ . By Theorem \ref{prp:operator inequality},  
we have $r=1$.
\end{proof}

\begin{example}

For $p \in [-1, 2]$ the Petz-Hasegawa function   
$PH_p$  \cite{HP} is defined as $$PH_p(t) = p(1-p)\frac{(t-1)^2}{(t^p - 1)(t^{1-p}-1)}.$$ 
It is obvious that $PH_p^*(0)=0$ for all $p\in [-1,2]$. Hence, by Theorem ~\ref{prp:operator inequality}, if $f\in OM_+^1(\sigma_{PH_p})$ and $f(0)=0$, then $f(t)=t$.
\end{example}

\vskip 3mm

\vskip 3mm



As a consequence of Theorem \ref{prp:operator inequality} we establish a condition on the dual of $\Phi$ so that the function $f$ in $OM_+^1(\sigma_\Phi)$ is trivial.

\begin{corollary}\label{prp:characterization of trivial}
Let $\Phi\in OM_+^1\backslash\{1,t\}$ with $$\Phi(0)+\Phi'(0)>0$$. 
If $f\in OM_+^1(\sigma_\Phi)$  and $f(0)=0$, 
then $f(t) = t$. 
\end{corollary}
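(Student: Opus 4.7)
The plan is to reduce this corollary to Theorem~\ref{prp:operator inequality}, whose operative hypothesis is $\Phi^*(0)=0$. Thus the task is to derive $\Phi^*(0)=0$ from the weaker assumption $\Phi(0)+\Phi'(0)>0$ together with $f(0)=0$.

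First I would dispatch the trivial cases for $f$: if $f\equiv 1$ then $f(0)=1\ne 0$, contradicting the hypothesis, and if $f(t)=t$ we are already done. So I may assume $f\in OM_+^1\setminus\{1,t\}$. At this point I invoke Remark~\ref{prp:condition of 0 at Phi}, which says that for $f,\Phi\in OM_+^1\setminus\{1,t\}$ with $f\in OM_+^1(\sigma_\Phi)$, the condition $f(0)=0$ forces $\Phi(0)=0$. Combined with the standing hypothesis $\Phi(0)+\Phi'(0)>0$, this yields $\Phi'(0)>0$.

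Next I would translate $\Phi'(0)>0$ into $\Phi^*(0)=0$ directly from the definitions of the two transforms. Since $\Phi'(t)=t\Phi(1/t)$, we have $\Phi'(0)=\lim_{s\to\infty}\Phi(s)/s$, so $\Phi'(0)>0$ forces $\Phi(s)\to\infty$ as $s\to\infty$; consequently $\Phi^*(0)=\lim_{s\to\infty}1/\Phi(s)=0$. Theorem~\ref{prp:operator inequality} now applies to $f$ and $\Phi$ and delivers $f(t)=t$. The main obstacle, such as it is, is conceptual rather than technical: one must recognize that $f(0)=0$ silently eliminates the $\Phi(0)$ summand through Remark~\ref{prp:condition of 0 at Phi}, so that the seemingly weaker hypothesis $\Phi(0)+\Phi'(0)>0$ effectively collapses to $\Phi'(0)>0$, which is precisely the information needed to encode $\Phi^*(0)=0$.
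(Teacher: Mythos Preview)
Your argument is correct and follows the paper's route—derive $\Phi^*(0)=0$ and then apply Theorem~\ref{prp:operator inequality}. You are in fact more careful than the paper, which jumps directly to ``$\Phi'(0)^{-1}>0$'' without justification; your appeal to Remark~\ref{prp:condition of 0 at Phi} (using $f(0)=0$ to force $\Phi(0)=0$, hence $\Phi'(0)>0$) supplies exactly the step the paper leaves implicit, since $\Phi(0)+\Phi'(0)>0$ alone does not guarantee $\Phi'(0)>0$.
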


\begin{proof}
From the assumption we get $\Phi'(0)^{-1} >0$, or, equivalently, 
$$
\lim_{t \to 0} \frac{\Phi^*(t)}{t} > 0.
$$
Consequently, $\Phi^*(0)=\lim_{t \to 0} \Phi^*(t) =0.$ By Theorem \ref{prp:operator inequality}, $f(t) = t$ for all $t \in [0, \infty).$
\end{proof}

\begin{example}
The Stolarsky means \cite{ST} are defined as  
$$ S_\alpha(s,t):=\left( {{s^\alpha -t^\alpha} \over {\alpha(s-t)}}\right)^{1 \over {\alpha-1}} 
\text{ for } \alpha \in [-2, 2] \backslash \{0,1\}.$$ 
Denote by $S_0(s,t):= \lim_{\alpha \rightarrow 0 } S_\alpha (s,t)$ and 
$S_1(s,t):= \lim_{\alpha \rightarrow 1 } S_\alpha(s,t)$. 
It is known \cite{N} that $S_\alpha(1,t) \in OM_+^1$ for all 
$\alpha\in [-2,2]$  and  
$$S_2=f_\nabla,  \quad S_{-1}=f_\# \text{ and } S_0=f_\lambda, $$
where $\lambda$ is the logarithmic mean defined by $f_\lambda(t)={{t-1}\over {\log t}}$.   
A simple calculation shows that the function $\Phi(t) := S_\alpha(1,t)$ satisfies conditions in either Theorem \ref{prp:operator inequality} or Corollary \ref{prp:characterization of trivial}. Let $f\in OM_+^1(\sigma_\Phi)$ and $f(0)=0$. For any $\alpha\in [-2,0]$, $\Phi^*(0)=0$. Then by Theorem \ref{prp:operator inequality}, $f(t)=t$. When $\alpha\in (0,2]$, since $\Phi(0)+\Phi'(0)>0$,
by Corollary \ref{prp:characterization of trivial}, we have $f(t)=t$.  Thus, the class of  functions $f$ in $OM_+^1(\sigma_\Phi)$ such that $f(0)=0$ is trivial. 
\end{example}


\vskip 3mm


\remark{
Considering the above-mentioned arguments, we have provided sufficient conditions for a function $f\in OM_+^1(\sigma)$ to be trivial. However, these conditions are not complementary. 
Indeed,
if $\alpha\in (0,1)$ and $\Phi=!_\alpha$, then 
$$\Phi^*(0)>0 \text{ and } \Phi(0)+\Phi'(0)=0$$
and our results are inconclusive for $\Phi$. 
}
\bigskip

As stated in \cite{A} and \cite{FN} the relation $OM_+^1(!_\alpha)=OM_+^1$ holds for  $\alpha\in [0,1]$. In the following result, we establish some conditions for the identity $OM_+^1(\sigma) =OM_+^1$ happens. 
\begin{proposition}\label{harmonic1}
Let $\sigma$ be an operator mean and $\alpha\in (0,1)$. Then, 
the following statements are equivalent:
\begin{itemize}
\item[(I)] $OM_+^1(\sigma)\ni f_{!_\alpha}$;
\item[(II)] $\sigma=!_\beta$ for some $\beta\in [0,1]$;
\item[(III)] $OM_+^1(\sigma)=OM_+^1$.
\end{itemize}
\end{proposition}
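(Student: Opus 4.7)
The implication (III) $\Rightarrow$ (I) is immediate since $f_{!_\alpha} \in OM_+^1$, and (II) $\Rightarrow$ (III) is the Fujii--Nakamura characterization recalled in the introduction. The substance lies in (I) $\Rightarrow$ (II), which I sketch below; write $\Phi = f_\sigma$. Since $f_{!_\alpha}(0) = 0$ and $\alpha \in (0,1)$, Remark~\ref{prp:condition of 0 at Phi} forces $\Phi(0) = 0$. If additionally $\Phi^*(0) = 0$, then Theorem~\ref{prp:operator inequality} would yield $f_{!_\alpha}(t) = t$, which is false; hence $\Phi^*(0) > 0$, so $\Phi$ is bounded, and in particular $\Phi'(0) = \lim_{s \to \infty} \Phi(s)/s = 0$, where $\Phi'(t) = t\,\Phi(1/t)$ is the transpose.

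Next I would extract a scalar inequality via Lemma~\ref{key lemma1}. Since $OM_+^1(\sigma_\Phi) = OM_+^1(\sigma_{\Phi'})$ (by the symmetry $A \leftrightarrow B$), I may apply Lemma~\ref{key lemma1} with $f = f_{!_\alpha}$ and with $\Phi'$ playing the role of the lemma's $\Phi$, because $f(0) = 0$ and $\Phi'(0) = 0$. Using the elementary identity
\[ H\bigl(f_{!_\alpha}(u),\, f_{!_\alpha}(v)\bigr) = f_{!_\alpha}\bigl(H(u,v)\bigr), \qquad H(u,v) := \tfrac{2uv}{u+v}, \]
both expressions produced by the lemma become scalar multiples of the rank-one projection $P$, and the hypothesis $f_{!_\alpha} \in OM_+^1(\sigma_\Phi)$ yields the scalar inequality
\[ f_{!_\alpha}(\Phi(s)) \le \Phi(f_{!_\alpha}(s)) \quad \text{for all } s > 0. \]

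The final step combines this with the L\"owner integral representation and Jensen's inequality. A direct computation gives the semigroup law $f_{!_\alpha} \circ f_{!_\beta} = f_{!_{\alpha\beta}}$. Since $\Phi$ is operator monotone with $\Phi(0) = 0$ and $\Phi(\infty) < \infty$, it admits a representation $\Phi(t) = \int_{(0,1)} f_{!_\beta}(t)\, d\tilde\mu(\beta)$ for a probability measure $\tilde\mu$ on $(0,1)$. The semigroup law gives
\[ \Phi(f_{!_\alpha}(t)) = \int f_{!_\alpha}\bigl(f_{!_\beta}(t)\bigr)\, d\tilde\mu(\beta), \]
and the strict concavity of $f_{!_\alpha}$ combined with Jensen's inequality yields the reverse
\[ f_{!_\alpha}(\Phi(t)) = f_{!_\alpha}\!\left(\int f_{!_\beta}(t)\, d\tilde\mu(\beta)\right) \ge \int f_{!_\alpha}\bigl(f_{!_\beta}(t)\bigr)\, d\tilde\mu(\beta) = \Phi(f_{!_\alpha}(t)). \]
Together with the preceding inequality, Jensen's must hold with equality, and strict concavity then implies that $\beta \mapsto f_{!_\beta}(t)$ is $\tilde\mu$-a.e.\ constant. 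For any fixed $t \ne 1$ this map is strictly monotone, so $\tilde\mu$ is a Dirac mass $\delta_{\beta_0}$ and $\sigma = !_{\beta_0}$.

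The main obstacle, I anticipate, is assembling the two functional identities for weighted harmonic means (the two-variable harmonic identity and the one-variable semigroup law) together with the L\"owner integral representation, and verifying that Lemma~\ref{key lemma1} can be invoked through the transpose without losing the subpreservation hypothesis.
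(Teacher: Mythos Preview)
Your argument is correct, and each step can be made rigorous along the lines you indicate (the extension of the subpreserving inequality to semidefinite $P,Q$ is the same limiting argument used in the proof of Theorem~\ref{prp:operator inequality}; the Kubo--Ando integral representation does reduce to a probability measure on $(0,1)$ once $\Phi(0)=0$ and $\Phi^*(0)>0$; and the two harmonic identities you use are easy to check). So the route works.

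However, the paper's proof of (I)~$\Rightarrow$~(II) is considerably shorter and avoids all of Remark~\ref{prp:condition of 0 at Phi}, Lemma~\ref{key lemma1}, Theorem~\ref{prp:operator inequality}, and the integral representation. The key observation you are missing is that the \emph{adjoint} $f_{!_\alpha}^{\,*}(t)=(1-\alpha)+\alpha t=f_{\nabla_\alpha}(t)$ is affine. Passing to adjoints turns the subpreserving hypothesis into the superpreserving inequality
\[
(1-\alpha)+\alpha\,(A\sigma^* B)\ \ge\ \bigl((1-\alpha)+\alpha A\bigr)\,\sigma^*\,\bigl((1-\alpha)+\alpha B\bigr),
\]
while the concavity of any operator mean immediately gives the reverse inequality. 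Equality then holds, and specializing to $A=1$, $B=t$ and differentiating once shows $\phi(t):=1\sigma^* t$ is affine, i.e.\ $\sigma^*=\nabla_\beta$, hence $\sigma=!_\beta$. Your approach trades this two-line duality/concavity trick for the heavier machinery of Section~\ref{section name subpreserving} plus Jensen's inequality; it has the merit of illustrating how those earlier results can be leveraged, but it does not buy any extra generality here.
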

\begin{proof}
It is sufficient to show (I) $\Rightarrow$ (II). Let $f(t):=1!_\alpha t$. Then, we have
\begin{align*}
f^*(A\sigma^* B)&=f((A\sigma^* B)^{-1})^{-1} \\
&=
f(A^{-1}\sigma B^{-1})^{-1} \\
&\ge 
\left( f(A^{-1})\sigma f(B^{-1}) \right)^{-1} \\
&=
\left( f^*(A)^{-1}\sigma f^*(B)^{-1} \right)^{-1} \\
&= 
f^*(A)\sigma^*f^*(B). 
\end{align*}
Consequently,
$$(1-\alpha) + \alpha A\sigma^* B 
\ge ((1-\alpha) + \alpha A )
 \sigma^*
((1-\alpha) + \alpha B ).$$
Furthermore, the concavity of an operator mean implies 
\begin{align*}
((1-\alpha) + \alpha A )
 \sigma^*
((1-\alpha) + \alpha B )
&\ge 
(1-\alpha)\sigma^* (1-\alpha) +(\alpha A)\sigma^* (\alpha B) \\
&= (1-\alpha) + \alpha A\sigma^* B.
\end{align*}
Therefore,
$$(1-\alpha) + \alpha A\sigma^* B  = ((1-\alpha) + \alpha A )\sigma^*((1-\alpha) + \alpha B ).$$
Consequently, for $A=1$ and $B=t$,  
$$(1-\alpha) + \alpha \phi(t) = \phi( (1-\alpha) + \alpha t ),$$
where $\phi(t)=1\sigma^* t$. 
Differentiating both sides, we have
$$\alpha {{d\phi}\over {dt}}(t)=\alpha {{d\phi}\over {dt}}( (1-\alpha) + \alpha t ).$$
Thus, ${{d\phi}\over {dt}}$ is constant and $\phi(t)=(1-\beta)+\beta t$ for some $\beta \in [0,1]$. 
\end{proof}




\section{Class of $\sigma$-preserving functions}

In this section, for each operator mean $\sigma$, we study the 
class of functions $f$ preserving $\sigma$,
\begin{equation}\label{equation1}
 f(A\sigma B)=f(A)\sigma f(B) \quad \text{ for all }\quad  A,B>0.
\end{equation}

\begin{lemma}\label{equation nabla}
Let $f, \Phi \in OM_+^1$ with $\Phi(0) = 0$ and $f(0) = 0$.
If $f$ preserves $\sigma_{\Phi}$, then 
$$f^*\left(\Phi'^*\left(x~\nabla~y \right)\right) =\Phi'^*\left( f^*(x)~ \nabla~  f^*(y) \right)$$
for all real numbers $x,y>0$.
\end{lemma}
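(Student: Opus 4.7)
The plan is to derive a scalar functional equation by plugging the $2\times 2$ test matrices from Lemma~\ref{key lemma1} into the preservation identity, then rearrange by a reciprocal change of variables.

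First I would upgrade the hypothesis: since the transpose mean satisfies $A \sigma_{\Phi'} B = B \sigma_\Phi A$, preservation of $\sigma_\Phi$ by $f$ gives
\begin{equation*}
f(A \sigma_{\Phi'} B) = f(B \sigma_\Phi A) = f(B) \sigma_\Phi f(A) = f(A) \sigma_{\Phi'} f(B),
\end{equation*}
so $f$ preserves $\sigma_{\Phi'}$ as well. The conditions $\Phi(0) = f(0) = 0$ now let me apply Lemma~\ref{key lemma1}; equating its two scalar coefficients (and stripping off the common projection $P$) yields
\begin{equation*}
f\!\left(\Phi'\!\left(\tfrac{4xy}{x+y}\right)\right) = \Phi'\!\left(\tfrac{2 f(2x) f(2y)}{f(2x) + f(2y)}\right), \qquad x,y>0.
\end{equation*}

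To finish, I would substitute $x \mapsto 1/(2x)$ and $y \mapsto 1/(2y)$. The harmonic-mean argument $4xy/(x+y)$ transforms into $1/(x\nabla y)$, and, using $f(1/t) = 1/f^*(t)$, the right-hand harmonic-mean argument transforms into $1/(f^*(x)\nabla f^*(y))$. Then the elementary duality $\Phi'(1/t) = 1/\Phi'^*(t)$ together with one more application of $f(1/s) = 1/f^*(s)$ on the left-hand side converts both sides to reciprocals of the target expressions; taking reciprocals gives $f^*(\Phi'^*(x \nabla y)) = \Phi'^*(f^*(x) \nabla f^*(y))$, as required.

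There is no real obstacle beyond careful bookkeeping of transposes, adjoints, and the identity relating the harmonic mean to the reciprocal of an arithmetic mean; all analytic content has already been absorbed into Lemma~\ref{key lemma1}.
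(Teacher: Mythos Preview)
Your proposal is correct and follows essentially the same route as the paper: apply Lemma~\ref{key lemma1} to get the scalar identity in harmonic-mean form, then pass to adjoints/reciprocals to convert $!$ into $\nabla$. Your argument is in fact slightly more explicit than the paper's, since you spell out why preservation of $\sigma_\Phi$ forces preservation of $\sigma_{\Phi'}$ (via $A\sigma_{\Phi'}B=B\sigma_\Phi A$), a point the paper leaves implicit when it invokes Lemma~\ref{key lemma1}; the only small gap in both treatments is that the preservation hypothesis is stated for $A,B>0$ while $P$ is merely positive semidefinite, which is handled by the same limit argument used in Theorem~\ref{prp:operator inequality}.
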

\begin{proof}
By Lemma \ref{key lemma1}, for all real numbers $x,y>0$, we have 
$$f\left(\Phi'\left(x~!~y \right)\right) =\Phi'\left( f(x)~ !~  f(y) \right).$$
Consequently, 
\begin{align*}
f^*\left(\Phi'^*\left(x~\nabla~y \right)\right) 
&=
f\left(\Phi'\left((x~\nabla~y)^{-1} \right)\right)^{-1} \\
&=
f\left(\Phi'\left((x^{-1}~!~y^{-1}) \right)\right)^{-1} \\
&=
\Phi'\left( f(x^{-1})~ !~  f(y^{-1}) \right)^{-1}\\
&=
\Phi'^*\left( 
\left(f(x^{-1})~ !~  f(y^{-1}) \right)^{-1}
\right)\\
&=
\Phi'^*\left( 
\left(f^*(x)^{-1}~ !~  f^*(y)^{-1} \right)^{-1}
\right)\\
&=
\Phi'^*\left( 
f^*(x) \nabla  f^*(y) 
\right).
\end{align*}

\end{proof}

\begin{theorem}\label{equation main theorem}
Let $f, \Phi \in OM_+^1\backslash\{1,t\}$ with $\Phi(0) = 0$. 
Then, the following statements are equivalent:
\begin{itemize}
\item[(I)] $f$ preserves $\sigma_{\Phi}$; 
\item[(II)] $\Phi=f_{!_\alpha}$  and $f=f_{!_\beta}$ for some $\alpha, \beta \in (0,1)$.
\end{itemize}
\end{theorem}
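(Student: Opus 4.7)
The plan is to verify (II) $\Rightarrow$ (I) by direct computation and to reduce (I) $\Rightarrow$ (II) to the functional equation provided by Lemma \ref{equation nabla}.

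For (II) $\Rightarrow$ (I), suppose $\Phi = f_{!_\alpha}$ and $f = f_{!_\beta}$. The representations $A\,!_\alpha B = \bigl((1-\alpha)A^{-1}+\alpha B^{-1}\bigr)^{-1}$ and $f(X) = \bigl((1-\beta)I + \beta X^{-1}\bigr)^{-1}$ allow one to compute both $f(A\,!_\alpha B)$ and $f(A)\,!_\alpha f(B)$ and observe that each collapses to
$$\bigl((1-\beta)I + \beta((1-\alpha)A^{-1}+\alpha B^{-1})\bigr)^{-1}.$$

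The substantive direction is (I) $\Rightarrow$ (II). First I would verify $f(0)=0$. Specializing the preserving relation to positive scalars $A=1$, $B=t$ gives $f(\Phi(t)) = \Phi(f(t))$, and letting $t \to 0$ yields $\Phi(f(0)) = f(0)$. Since $\Phi$ is concave with $\Phi(0)=0$, $\Phi(1)=1$, $\Phi \neq t$, a short argument (using that $\Phi-\mathrm{id}$ is concave with three zeros forces $\Phi\equiv t$ by real-analyticity) shows the only fixed points of $\Phi$ on $[0,\infty)$ are $0$ and $1$. The value $f(0)=1$ is excluded by Lemma \ref{lemma:constant} together with $f \neq 1$, so $f(0)=0$.

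With both hypotheses of Lemma \ref{equation nabla} in place, I have
$$f^*(\Phi'^*(x \nabla y)) = \Phi'^*(f^*(x) \nabla f^*(y)), \quad x,y>0.$$
The decisive trick is to first set $y=x$: this gives the commutation relation $f^* \circ \Phi'^* = \Phi'^* \circ f^*$. Plugging this back into the functional equation and canceling the strictly increasing $\Phi'^*$ (non-constant since $\Phi \neq t$) yields $f^*((x+y)/2) = (f^*(x)+f^*(y))/2$. So $f^*$ is midpoint-affine, hence affine, and normalization $f^*(1)=1$ forces $f^*(x) = (1-\beta)+\beta x$ for some $\beta \in [0,1]$; the assumption $f \notin \{1,t\}$ pins down $\beta \in (0,1)$ and $f = f_{!_\beta}$.

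Finally I exploit the commutation relation again. Setting $T(x) = f^*(x)=(1-\beta)+\beta x$ and $g=\Phi'^*$, the relation $g(T(x)) = T(g(x))$ differentiates to $g'(T(x)) = g'(x)$. Iteration gives $g'(x) = g'(T^n(x))$, and since $T^n(x) \to 1$ and $g$ is real-analytic (being operator monotone), $g'$ is the constant $g'(1)$. Hence $g=\Phi'^*$ is affine with $g(1)=1$, so $g(x) = (1-\alpha)x + \alpha$ for some $\alpha \in [0,1]$. Unwinding the transpose and adjoint operations shows $\Phi = f_{!_\alpha}$, and $\Phi \notin \{1,t\}$ forces $\alpha \in (0,1)$. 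The main obstacle I anticipate is the scalar step obtaining $f(0)=0$; once this is secured, Lemma \ref{equation nabla} reduces the operator identity to the clean real-variable identity above, and the rest follows mechanically by the symmetrization/cancellation and iteration tricks outlined.
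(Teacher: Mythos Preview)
Your proposal is correct. The overall architecture matches the paper's: first establish $f(0)=0$, then invoke Lemma~\ref{equation nabla}, use the commutation $f^*\circ\Phi'^{*}=\Phi'^{*}\circ f^{*}$ to cancel $\Phi'^{*}$ and conclude that $f^{*}$ is affine, and finally identify $\Phi$ as a weighted harmonic mean.

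The differences are in two local arguments. For $f(0)=0$, the paper writes $f(0)=\Phi(f(0))$ as $\Phi'(1/f(0))=1$ and uses that $\Phi'\in OM_+^1\setminus\{1\}$ takes the value $1$ only at $1$ (Lemma~\ref{lemma:constant}); your fixed-point argument via concavity of $\Phi-\mathrm{id}$ plus analyticity reaches the same conclusion and is equally valid. The more substantive divergence is in the final step: once $f=f_{!_\beta}$, the paper observes that $f_{!_\beta}\in OM_+^1(\sigma_\Phi)$ and invokes Proposition~\ref{harmonic1} (whose proof uses operator concavity of $\sigma^{*}$) to force $\sigma_\Phi=!_\alpha$. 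You instead stay at the scalar level, differentiating the commutation relation $g\circ T=T\circ g$ with $T(x)=(1-\beta)+\beta x$ and iterating $T^{n}(x)\to 1$ to make $g'=\Phi'^{*}{}'$ constant. Your route is more self-contained (it does not need Proposition~\ref{harmonic1} or any operator-level inequality beyond Lemma~\ref{equation nabla}), at the cost of appealing explicitly to real-analyticity of operator monotone functions; the paper's route is shorter here because the work has been offloaded to the earlier proposition.
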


\begin{proof} 
The implication (II) $\Rightarrow $ (I) is obvious. We show (I) $\Rightarrow $ (II). 
First, notice that $\Phi'^*\circ f^*=f^*\circ \Phi'^*$. Indeed,
$$f(\Phi(t))=f(1\sigma_\Phi t)=f(1)\sigma_\Phi f(t)=\Phi(f(t)).$$
Now assume that $f(0)\not=0$. Then
$$f(0)=\lim_{t\rightarrow 0}f(1\sigma_\Phi t)=  
\lim_{t\rightarrow 0} f(1)\sigma_\Phi f(t)=\lim_{t\rightarrow 0}\Phi(f(t))=\Phi(f(0))>0$$
and 
$$1 = { 1 \over {f(0)}} \Phi(f(0))= \Phi'\left( { 1 \over {f(0)}}\right) .$$ 
Since $\Phi'(t)\not=1$, it follows that   
$f(0)=1$, which contradicts the fact that $f$ is non-trivial. Therefore, $f(0)=0$. By Lemma \ref{equation nabla}, 
$$\Phi'^*(f^*(x \nabla y))=
f^*\left(\Phi'^*\left(x~\nabla~y \right)\right) =\Phi'^*\left( f^*(x)~ \nabla~  f^*(y) \right).$$
Since $\Phi'$ and $\Phi'^*$ are injective, from the last identity it implies that $f^*(x\nabla y)=f^*(x)\nabla f^*(y)$.
Therefore, $f=f_{!_\beta}$ for some $\beta\in (0,1)$; and hence, 
 $OM_+^1(\sigma_\Phi)\ni f_{!_\beta}$. 
By Proposition \ref{harmonic1}, $\sigma_\Phi =!_\alpha$ for some $\alpha\in(0,1)$.

\end{proof}

Notice that for any $A,B>0$,
$$f(A \sigma_{\Phi} B) =f(A) \sigma_{\Phi} f(B)$$
is equivalent to 
$$f^*(A \sigma_{\Phi^*} B) =f^*(A) \sigma_{\Phi^*} f^*(B).$$
Then, from Theorem \ref{equation main theorem} and  the duality between the weighted harmonic and weighted arithmetic means, we have the following. 

\begin{corollary}\label{equation:arithmetic}
Let $f, \Phi \in OM_+^1\backslash\{1,t\}$ with $\Phi^*(0) = 0$. 
If $f$ and $\Phi$ are non-trivial, then, 
the following statements are equivalent:
\begin{itemize}
\item[(I)] $f(A \sigma_{\Phi} B) =f(A) \sigma_{\Phi} f(B)$ for all $A,B>0$; 
\item[(II)] $\Phi=f_{\nabla_\alpha}$  and $f=f_{\nabla_\beta}$ for some $\alpha, \beta \in (0,1)$.
\end{itemize}
\end{corollary}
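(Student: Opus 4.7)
The plan is to reduce this corollary to Theorem \ref{equation main theorem} via the equivalence recorded immediately above its statement: $f(A \sigma_\Phi B) = f(A) \sigma_\Phi f(B)$ for all $A,B>0$ if and only if $f^*(A \sigma_{\Phi^*} B) = f^*(A) \sigma_{\Phi^*} f^*(B)$ for all $A,B>0$. This transfers the problem to the dual pair $(f^*, \Phi^*)$, and the standing hypothesis $\Phi^*(0) = 0$ is exactly what Theorem \ref{equation main theorem} requires of the second coordinate there.

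The implication (II) $\Rightarrow$ (I) I would dispatch in a single line: if $f(t) = (1-\beta) + \beta t$ and $\sigma_\Phi = \nabla_\alpha$, then both $f(A \sigma_\Phi B)$ and $f(A) \sigma_\Phi f(B)$ expand to the same affine combination $(1-\beta) + \beta[(1-\alpha)A + \alpha B]$ of $A$ and $B$. For (I) $\Rightarrow$ (II), I would first observe that the adjoint operation is an involution on $OM_+^1$ that fixes the constant function $1$ and the identity $t$ individually, so the hypothesis $f,\Phi \in OM_+^1 \setminus \{1,t\}$ transfers verbatim to $f^*,\Phi^*$. Applying Theorem \ref{equation main theorem} to the pair $(f^*, \Phi^*)$ then yields $\Phi^* = f_{!_\alpha}$ and $f^* = f_{!_\beta}$ for some $\alpha, \beta \in (0,1)$. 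Taking adjoints once more and using $(f_{!_\gamma})^*(t) = f_{!_\gamma}(t^{-1})^{-1} = (1-\gamma) + \gamma t = f_{\nabla_\gamma}(t)$---the elementary duality between weighted harmonic and weighted arithmetic means---yields $\Phi = f_{\nabla_\alpha}$ and $f = f_{\nabla_\beta}$, as required.

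No substantive obstacle arises; the argument is pure bookkeeping with the adjoint operation. The two routine facts that underpin it are (i) the equivalence of preservation of $\sigma_\Phi$ by $f$ with preservation of $\sigma_{\Phi^*}$ by $f^*$---which is stated in the paper and follows from $(A\sigma_\Phi B)^{-1}=A^{-1}\sigma_{\Phi^*}B^{-1}$ together with $f(A)^{-1}=f^*(A^{-1})$ via the functional calculus---and (ii) the compatibility of non-triviality with the adjoint operation, already noted above. With these in hand, the reduction to Theorem \ref{equation main theorem} is automatic.
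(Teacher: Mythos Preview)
Your proposal is correct and follows exactly the approach the paper takes: pass to the adjoint pair $(f^*,\Phi^*)$ via the equivalence recorded just before the corollary, apply Theorem~\ref{equation main theorem} (whose hypothesis $\Phi^*(0)=0$ is now satisfied), and translate back using the duality $(f_{!_\gamma})^*=f_{\nabla_\gamma}$. Your write-up is in fact more detailed than the paper's, which leaves the verification of non-triviality of $f^*,\Phi^*$ and the explicit computation of $(f_{!_\gamma})^*$ implicit.
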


\begin{example}
Let $\mathrm{ALG}_p$ be the representative function 
for the power difference mean, which is defined by 
$$
\mathrm{ALG}_p(t) = 
\left\{\begin{array}{ll}
\dfrac{p-1}{p}\cdot \dfrac{1-t^p}{1-t^{p-1}}&t\not= 1 ;\\
1&t=1.\\
\end{array}
\right.
$$
For $p\in (-1,2)$, it is easy to see that either $ALG_p(0)=0$ or $ALG_p^*(0)=0$. 
Therefore, from Theorem \ref{equation main theorem} and Corollary \ref{equation:arithmetic},  
the equation (\ref{equation1}) implies that $f$ is trivial. 
\end{example}


For an arbitrary operator mean $\sigma$, let 
$OM_+^1(\sigma)_0:=\{f\in OM_+^1(\sigma)~|~f(0)=0\}.$ 
Then, it is clear that  
$$\{f\in OM_+^1~|~ f(0)=0, f(A \sigma B) =f(A) \sigma f(B) \text{ for all } A,B>0\}\subset 
OM_+^1(\sigma)_0.$$

In Theorem \ref{prp:operator inequality} 
(resp. Proposition \ref{prp:characterization of trivial}), 
we show that 
for a non-trivial operator mean $\sigma_\Phi$ such that $\Phi^*(0)=0$ (resp. $\Phi(0)+\Phi'(0)>0$), 
the class $OM_+^1(\sigma_\Phi)_0$ is trivial. 
Thus, the following is obtained. 

\begin{corollary}\label{33331}
Let $\Phi\in OM_+^1$. If either $\Phi^*(0)=0$ or $\Phi(0)+\Phi'(0)>0$, then 
a function $f\in OM_+^1$ satisfying $f(0)=0$ and equation (\ref{equation1})
is trivial. 
\end{corollary}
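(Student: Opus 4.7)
The plan is to derive this corollary as an immediate consequence of the structural results of Section~\ref{section name subpreserving}, by observing that the equation $f(A\sigma_\Phi B) = f(A)\sigma_\Phi f(B)$ is strictly stronger than the $\sigma_\Phi$-subpreserving inequality. In particular, any $f$ satisfying (\ref{equation1}) automatically satisfies $f(A\sigma_\Phi B) \le f(A)\sigma_\Phi f(B)$, and hence $f \in OM_+^1(\sigma_\Phi)$. Combined with the hypothesis $f(0) = 0$, this places $f$ in the class $OM_+^1(\sigma_\Phi)_0$.

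Next, I would split into two cases according to which of the two assumptions on $\Phi$ is in force. If $\Phi^*(0) = 0$, Theorem~\ref{prp:operator inequality} applies directly, yielding $f(t) = t$ on $[0,\infty)$. If instead $\Phi(0) + \Phi'(0) > 0$, Corollary~\ref{prp:characterization of trivial} gives the same conclusion. In both cases we obtain $f(t) = t$, and since the constant function $1$ violates $f(0) = 0$, this is exactly the assertion that $f$ is trivial in the sense of the paper.

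There is no substantive obstacle at this stage, because all of the heavy lifting has already been carried out in the matrix identity of Lemma~\ref{key lemma1} and its consequences. The only minor bookkeeping point worth flagging is that the statement implicitly requires $\Phi \in OM_+^1 \setminus \{1,t\}$: for the two trivial representing functions, equation (\ref{equation1}) holds for every $f$ (since $\sigma_\Phi$ reduces to a projection onto one argument) and the conclusion evidently fails. This non-triviality restriction is built into the hypotheses of both Theorem~\ref{prp:operator inequality} and Corollary~\ref{prp:characterization of trivial}, so invoking those results preserves the condition automatically.
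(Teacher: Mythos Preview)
Your proposal is correct and matches the paper's own approach exactly: the paper states this corollary without a formal proof, relying on the paragraph immediately preceding it, which observes that any $\sigma_\Phi$-preserving $f$ with $f(0)=0$ lies in $OM_+^1(\sigma_\Phi)_0$ and then appeals to Theorem~\ref{prp:operator inequality} (for $\Phi^*(0)=0$) or Corollary~\ref{prp:characterization of trivial} (for $\Phi(0)+\Phi'(0)>0$). Your remark that the statement implicitly assumes $\Phi\in OM_+^1\setminus\{1,t\}$ is also accurate and worth keeping, since the paper only makes this explicit in the preceding discussion rather than in the corollary itself.
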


\begin{example}
The Petz-Hasegawa function $PH_p$ satisfies the following: 
$$PH_p(0)=0, \quad  PH_p^*(0)=0 \quad \text{ for }\quad  p\in [-1,0]\cup[1,2] $$ 
and
$$PH_p(0)=p(1-p), \quad  PH_p^*(0)=0  \quad \text{ for }\quad p\in (0,1). $$ 
Therefore, it follows from the above argument that a function $f\in OM_+^1$ satisfying $f(0)=0$ and equation (\ref{equation1})
is trivial. 

In addition, we can remove the condition $f(0)=0$ for the operator mean $\sigma_{PH_p}$.
Indeed, for $p\in [-1,0]\cup[1,2] $, 
the function $f\in OM_+^1$ satisfying (\ref{equation1})
is trivial by Theorem \ref{equation main theorem} and  Corollary \ref{equation:arithmetic}. 
Let us consider $p\in (0,1)$. Since 
$${{{d\over {dp}}PH_p}\over {PH_p}}={d\over {dp}}\log PH_p=
{1\over p}-{1\over {1-p}} +(\log t)\left( {1\over {t^{1-p}-1}}- {1\over {t^{p}-1}} \right)\ge 0$$
for $p\in (0,{1\over 2}]$, the function $p\mapsto PH_p$ is monotone increasing on $(0,{1\over 2}]$. 
Considering $PH_p=PH_{1-p}$, we have 
$$PH_0(t) =1\lambda t \le PH_p(t) \le PH_{1\over 2}(t) \lneq {{1+t}\over 2}.$$ 
This implies that
$\sigma_{PH_p}\not=\nabla_\alpha$ for all $\alpha\in [0,1]$. By Corollary \ref{equation:arithmetic}, 
$f$ satisfying (\ref{equation1}) must be trivial.  
\end{example}
\begin{example}
In Section \ref{section name subpreserving}, we described   
the Stolarsky mean $S_\alpha$ 
and showed that $OM_+^1(\sigma_{S_\alpha})_0=\{1,t\}$. 
Thus, for $S_\alpha$, a function $f\in OM_+^1$ satisfying $f(0)=0$ and (\ref{equation1}) must be trivial. 
Furthermore, 
since $S_\alpha(1,t)$ is not the arithmetic mean of $\alpha \in [-2,2)$, 
if $f$ preserves $S_\alpha$, then 
$f$ is trivial by Corollary \ref{33331}.
\end{example}

\section{Weighted power means}

For $t>0$, we consider  
\begin{equation*}
\Phi_{(r, \alpha)}(t):=
\begin{cases}
(\alpha t^r +(1-\alpha))^{1\over r}, & \text{if } r\in [-1,0)\cup(0,1];  \\
t^\alpha, & \text{if }  r=0.
\end{cases}
\end{equation*}
It is well-known that $\Phi_{(r, \alpha)}$ is in $OM_+^1$ for $r\in [-1,1]$ and $\alpha\in [0,1]$.
In addition, 
$$ \Phi_{(-1, \alpha)}=f_{!_\alpha},\quad  \Phi_{(0, \alpha)}=f_{\#_\alpha}, \quad 
\Phi_{(1, \alpha)}=f_{\nabla_\alpha}.$$ 
The mean $\sigma_{\Phi_{(r, \alpha)}}$ is called the weighted power mean. 

Although the weighted power mean $\Phi_{(r,\alpha)}$ does not satisfy 
the conditions in Theorem \ref{equation main theorem} and Corollary \ref{equation:arithmetic}
for some $r\in [-1,1]$, 
we establish some characterizations of functions $f$ preserving  $\Phi_{(r,\alpha)}$. 

Let us consider the following  operator mean $\sigma$:
\begin{equation}\label{integral mean}
t \sigma s = g(\alpha g^{-1}(t) + (1-\alpha) g^{-1}(s)), \quad s, t>0,\ \alpha \in (0,1),
\end{equation}
where $g$ is a bijective function from an open interval $I$ onto $(0,\infty)$. Some important means described by (\ref{integral mean}) are available in \cite{HLP}. For example, the weighted geometric mean is defined as $t \#_{1-\alpha} s = \exp(\alpha \log t + (1-\alpha)\log s)$ when  $g(t)=e^t$  $(t\in(-\infty,\infty))$, while the weighted power mean $ \Phi_{(r, \alpha)}$ is defined by (\ref{integral mean}) with $g(t)=t^{1\over a}$ $(a\in [-1,0)\cup (0,1])$.

\begin{proposition}\label{basic prop}
Let $f$ be a positive function on $(0,\infty)$ with $f(1)=1$. 
If an operator mean $\sigma$ satisfies (\ref{integral mean}), then the following statements are equivalent:
\begin{itemize}
\item[(I)] $f(t\sigma s)= f(t)\sigma f(s)$ for all $t,s >0$;
\item[(II)] $f(t)=g(\beta g^{-1}(t) + (1-\beta)g^{-1}(1)) $ for some $\beta\in \mathbb{R}$ and for all $t>0$. 
\end{itemize}
\end{proposition}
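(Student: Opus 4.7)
The plan is to transport the mean-preservation identity to a Jensen-type functional equation by conjugating with the parameter $g$. The direction (II)$\Rightarrow$(I) is a direct substitution: writing $u=g^{-1}(t)$, $v=g^{-1}(s)$, one checks that both $g^{-1}(f(t\sigma s))$ and $g^{-1}(f(t)\sigma f(s))$ collapse to $\beta(\alpha u+(1-\alpha)v)+(1-\beta)g^{-1}(1)$, so the equality follows after applying $g$.

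For the non-trivial direction (I)$\Rightarrow$(II), I would introduce the conjugated function $F:=g^{-1}\circ f\circ g$ defined on the open interval $I:=g^{-1}((0,\infty))$. Since $\sigma$ is given by $t\sigma s=g(\alpha g^{-1}(t)+(1-\alpha)g^{-1}(s))$, applying $g^{-1}$ to both sides of $f(t\sigma s)=f(t)\sigma f(s)$ converts the hypothesis into the one-parameter Jensen-type equation
$$F(\alpha u+(1-\alpha)v)=\alpha F(u)+(1-\alpha)F(v)\qquad (u,v\in I),$$
while the normalization $f(1)=1$ gives $F(c)=c$ with $c:=g^{-1}(1)\in I$. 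The remaining task is to show that every such $F$ is affine on $I$.

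To carry this out, let $G(x):=F(x+c)-c$ on the shifted open interval $I-c$, which contains $0$. The same functional equation holds for $G$, and $G(0)=0$. Specializing to $v=0$ and then to $u=0$ gives the partial homogeneities $G(\alpha x)=\alpha G(x)$ and $G((1-\alpha)x)=(1-\alpha)G(x)$. For $x,y$ in a small neighborhood of $0$, the substitution $u=x/\alpha$, $v=y/(1-\alpha)$ in the functional equation, combined with these homogeneities, reduces it to Cauchy's additive equation $G(x+y)=G(x)+G(y)$. The main obstacle is regularity: Cauchy's equation admits pathological solutions in general, so one must invoke the continuity of $f$ (which is implicit in the operator-monotone setting of the paper) to conclude $G(x)=\beta x$ for some $\beta\in\mathbb{R}$. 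Unwinding the substitutions yields $F(u)=\beta u+(1-\beta)c$, and hence
$$f(t)=g\bigl(\beta g^{-1}(t)+(1-\beta)g^{-1}(1)\bigr),$$
which is precisely (II).
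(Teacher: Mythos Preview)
Your approach coincides with the paper's: both conjugate by $g$, setting $F=g^{-1}\circ f\circ g$, and turn (I) into the weighted Jensen equation $F(\alpha u+(1-\alpha)v)=\alpha F(u)+(1-\alpha)F(v)$, then read off the affine form using $f(1)=1$. The paper's (II)$\Rightarrow$(I) is the same direct substitution you give.

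The one substantive difference is at the linearity step. The paper simply writes ``Consequently, $g^{-1}\circ f\circ g$ is linear'' and moves on, whereas you shift to $G(x)=F(x+c)-c$, extract the homogeneities $G(\alpha x)=\alpha G(x)$ and $G((1-\alpha)x)=(1-\alpha)G(x)$, reduce to Cauchy's equation near $0$, and then invoke continuity. Your caution is in fact justified: as the proposition is literally stated, $f$ is merely positive with $f(1)=1$, and without any regularity the weighted Jensen equation has non-affine (Hamel-basis) solutions, so the paper's one-line deduction is a genuine gap that you correctly identify. In the paper's applications $f\in OM_+^1$, hence continuous, and your argument makes this implicit hypothesis explicit. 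One minor point to tidy: your Cauchy reduction is only established on a neighborhood of $0$ (where $x/\alpha$ and $y/(1-\alpha)$ stay in the domain), but iterating $G(\alpha u)=\alpha G(u)$ propagates $G(u)=\beta u$ from that neighborhood to all of $I-c$, so the argument closes.
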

\begin{proof}(I) $\Rightarrow$ (II). 
Since $g(x) \sigma g(y) = g(\alpha x + (1-\alpha)y)$ for any $x,y\in I$, we have 
\begin{align*}
f(g(\alpha x + (1-\alpha)y))
&=f(g(x)\sigma g(y))\\
&=f(g(x))\sigma f(g(y))\\
&=g(\alpha (g^{-1}\circ f\circ g)(x)+ (1-\alpha) (g^{-1}\circ f\circ g)(y)   ). 
\end{align*}
Consequently, $g^{-1}\circ f\circ g $ is linear. Therefore, there exist $\beta$ and $\gamma$  in $\mathbb{R}$ 
such that $(g^{-1}\circ f\circ g)(t)=\beta t +\gamma$. For $t=g^{-1}(1)$, we obtain 
$$g^{-1}\circ f\circ g(t)=
g^{-1}\circ f(1)=
g^{-1}(1)=
\beta g^{-1}(1) +\gamma.$$
Thus, $\gamma=(1-\beta)g^{-1}(1)$, which implies the desired result. 

(II) $\Rightarrow$ (I). For $t,s>0$, 
\begin{align*}
f(t)\sigma f(s) &= g(\beta g^{-1}(t) +(1-\beta)g^{-1}(1)) \sigma g(\beta g^{-1}(s) + (1-\beta)g^{-1}(1))  \\
&= g( \alpha (\beta g^{-1}(t)+ (1-\beta)g^{-1}(1) ) +  (1-\alpha ) (\beta g^{-1}(s)+ (1-\beta)g^{-1}(1) )   ) \\
&= g(\beta ( \alpha g^{-1}(t) + (1-\alpha ) g^{-1}(s) ) + (1-\beta)g^{-1}(1) )  \\
&= g(\beta g^{-1}(t \sigma s) + (1-\beta)g^{-1}(1) ) \\
&= f(t\sigma s).
\end{align*}
\end{proof}

The following corollary is an immediate consequence of Proposition \ref{basic prop} for $g(t)=e^t$, $t\in (-\infty,\infty)$.

\begin{corollary}\label{formula equation r=0}Let $f$ be in $OM_+^1$ and let 
$t\sigma s = t^\alpha s^{1-\alpha}$. Then, 
the following statements are equivalent:
\begin{itemize}
\item[(I)] $f(t \sigma s) =f(t) \sigma f(s) $ for all $t,s>0$ ;
\item[(II)] $ f(t) =  t^\beta$
 for all $t>0$ and for some 
$\beta\in[0,1]$.
\end{itemize}
\end{corollary}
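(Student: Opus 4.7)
The plan is to apply Proposition~\ref{basic prop} with the bijection $g\colon(-\infty,\infty)\to(0,\infty)$ given by $g(t)=e^{t}$, so that $g^{-1}(t)=\log t$. With this choice, formula (\ref{integral mean}) becomes
$$t\,\sigma\, s = e^{\alpha\log t+(1-\alpha)\log s}=t^{\alpha}s^{1-\alpha},$$
which is precisely the weighted geometric mean in the statement. Since every $f\in OM_+^1$ automatically satisfies $f(1)=1$, the hypothesis of Proposition~\ref{basic prop} is fulfilled, and (I) is equivalent to the existence of some $\beta\in\R$ such that
$$f(t)=g\!\left(\beta\,g^{-1}(t)+(1-\beta)\,g^{-1}(1)\right)=e^{\beta\log t+(1-\beta)\cdot 0}=t^{\beta}$$
for every $t>0$.

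It remains to pin down the admissible range of $\beta$. Because $f$ is required to lie in $OM_+^1$, the power function $t\mapsto t^\beta$ must be operator monotone on $(0,\infty)$. By the classical Löwner theorem on power functions, this forces $\beta\in[0,1]$, giving the implication (I)$\Rightarrow$(II). The converse is immediate: for any $\beta\in[0,1]$ the function $t^\beta$ belongs to $OM_+^1$, and the identity
$$\bigl(t^{\alpha}s^{1-\alpha}\bigr)^{\beta}=\bigl(t^{\beta}\bigr)^{\alpha}\bigl(s^{\beta}\bigr)^{1-\alpha}$$
shows that (I) holds.

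I do not anticipate any real obstacle: the substantive content is packaged in Proposition~\ref{basic prop}, and the only additional ingredient is the standard characterization of operator monotone power functions, which restricts the parameter $\beta$ to $[0,1]$.
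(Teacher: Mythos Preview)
Your proof is correct and follows exactly the approach indicated in the paper, which simply states that the corollary is an immediate consequence of Proposition~\ref{basic prop} with $g(t)=e^{t}$. You have additionally spelled out the restriction $\beta\in[0,1]$ via L\"owner's theorem on power functions, a detail the paper leaves implicit in the hypothesis $f\in OM_+^1$.
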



\begin{corollary}\label{formula equation r not 0}
Let $f$ be in $OM_+^1$ and let 
$t\sigma s= (\alpha t^a +(1-\alpha) s^a)^{1\over a}
 \ \ (a\not=0, -1\le a \le 1) $. Then, 
the following statements are equivalent:
\begin{itemize}
\item[(I)] $f(t \sigma s) =f(t) \sigma f(s) $ for all $t,s>0$ ;
\item[(II)] $ f(t) = ( \beta t^a  + (1-\beta) )^{1\over a} $ for all $t>0$ and for some 
$\beta \in [0,1]$.
\end{itemize}
\end{corollary}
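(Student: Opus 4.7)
The plan is to apply Proposition \ref{basic prop} directly with the choice $g(x)=x^{1/a}$. For every $a\in[-1,0)\cup(0,1]$, this $g$ is a bijection from $(0,\infty)$ onto $(0,\infty)$ with inverse $g^{-1}(t)=t^{a}$, and the given mean can be rewritten as $t\sigma s=g(\alpha g^{-1}(t)+(1-\alpha)g^{-1}(s))$, so the hypothesis (\ref{integral mean}) is satisfied on the interval $I=(0,\infty)$.

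First I would invoke Proposition \ref{basic prop}. Since any $f\in OM_+^1$ is positive on $(0,\infty)$ with $f(1)=1$, the proposition tells us that (I) holds if and only if
$$f(t)=g\bigl(\beta g^{-1}(t)+(1-\beta)g^{-1}(1)\bigr)=(\beta t^{a}+1-\beta)^{1/a}$$
for some $\beta\in\mathbb{R}$. This already has the shape asked for in (II); what is missing is the restriction $\beta\in[0,1]$.

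Next I would pin down the range of $\beta$ using that $f$ maps $(0,\infty)$ into $(0,\infty)$. This forces $\beta t^{a}+1-\beta>0$ for all $t>0$. When $a>0$, letting $t\to 0^{+}$ gives $1-\beta\ge 0$ and letting $t\to\infty$ gives $\beta\ge 0$; when $a<0$ the limits $t\to 0^{+}$ and $t\to\infty$ swap roles but yield the same conclusion. Hence $\beta\in[0,1]$, proving (I)$\Rightarrow$(II).

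For the converse, I would simply observe that for $\beta\in[0,1]$ the function $(\beta t^{a}+1-\beta)^{1/a}$ is precisely the representing function $\Phi_{(a,\beta)}$ of the weighted power mean introduced at the start of Section~4, and was already stated there to lie in $OM_+^1$; hence it is an admissible candidate, and the identity $f(t\sigma s)=f(t)\sigma f(s)$ is a direct consequence of the (II)$\Rightarrow$(I) direction of Proposition \ref{basic prop}. There is no real obstacle: the nontrivial analytic content sits entirely inside Proposition \ref{basic prop}, and what remains is the elementary verification that admissibility of $f$ as a normalized positive function forces $\beta$ into $[0,1]$.
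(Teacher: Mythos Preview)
Your argument is correct and in fact more elementary than the paper's. Both proofs start by invoking Proposition~\ref{basic prop} with $g(x)=x^{1/a}$ to obtain $f(t)=(\beta t^{a}+1-\beta)^{1/a}$ for some real $\beta$, but they diverge on how to force $\beta\in[0,1]$. The paper first reduces to $a\in(0,1]$ via the duality $f\leftrightarrow f^{*}$, then obtains $\beta\ge 0$ from monotonicity of $f$, and finally excludes $\beta>1$ by a complex-analytic argument using the Pick (upper half-plane) characterization of operator monotone functions: for small $r>0$ one has $\arg\bigl(\beta(re^{i\theta})^{a}+1-\beta\bigr)>a\theta$, producing a point $z$ with $\arg f(z)>\pi$. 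Your route avoids all of this and uses only that $f$ is a positive finite function on $(0,\infty)$; the limiting argument immediately gives $0\le\beta\le1$ and handles $a>0$ and $a<0$ simultaneously.

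One small point worth tightening: the inference ``$f(t)>0$ forces $\beta t^{a}+1-\beta>0$'' is not entirely automatic when $1/a$ is an even integer, since then $(\text{negative})^{1/a}$ is positive. The cleanest justification is to read the conclusion of Proposition~\ref{basic prop} as $f=g\circ h\circ g^{-1}$ with $h(x)=\beta x+1-\beta$: since $g^{-1}\circ f\circ g$ is a self-map of $I=(0,\infty)$, $h$ must take values in $(0,\infty)$, and your limits then apply directly to $\beta x+1-\beta$. Alternatively, note that if $\beta t^{a}+1-\beta$ changed sign it would vanish at some $t_{0}\in(0,\infty)$, giving $f(t_{0})=0$ (for $a>0$) or $f(t_{0})=+\infty$ (for $a<0$), contradicting $f:(0,\infty)\to(0,\infty)$. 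With either remark added, your proof is complete and sidesteps the duality reduction and the Nevanlinna--Pick machinery used in the paper.
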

\begin{proof} The implication (II) $\Rightarrow$ (I) follows from Proposition \ref{basic prop}. We show
the implication (I) $\Rightarrow$ (II). Since $f\in OM_+^1$ if and only if $f^*\in OM_+^1$, 
it is sufficient to prove  Corollary for the case $a\in (0,1]$. By Proposition \ref{basic prop}, there exists $\beta\in {\mathbb R}$ such that 
$$ f(t) = ( \beta t^a  + (1-\beta) )^{1\over a} .$$  
Since $f$ is monotone increasing, we have $\beta\ge 0$. Now, we assume that
$\beta>1$.  Then, 
$$\arg\left( \beta (r e^{i\theta})^a + 1-\beta \right) > a \theta$$
for a sufficiently small $r>0$. It follows that there exists a complex number $z$ such that 
$0\le \arg z \le \pi$ and 
 $\arg f(z)>\pi$, which contradicts $f\in OM_+^1$. Therefore, we have $\beta\in [0,1]$.

\end{proof}


The matrix generalization of Corollary \ref{formula equation r not 0} is as follows.
\begin{proposition}\label{main theorem1}
Let $r\in (-1,1)$, $\alpha \in (0,1)$ 
and let $f \in OM_+^1$. If $\Phi=\Phi_{(r,\alpha)}$, then the following statements are equivalent: 
\begin{itemize}
\item[(I)] $f(A \sigma_{\Phi} B) = f(A) \sigma_{\Phi} f(B)$ for all $A,B>0$;
\item[(II)] $f$ is trivial.
\end{itemize}
\end{proposition}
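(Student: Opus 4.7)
The implication (II) $\Rightarrow$ (I) is immediate: both $f \equiv 1$ and $f(t) = t$ preserve every operator mean. For (I) $\Rightarrow$ (II), the plan is to reduce to Theorem \ref{equation main theorem} and Corollary \ref{equation:arithmetic} via a case split on the sign of $r$, and then rule out the surviving conclusions by comparing first and second derivatives at $t = 1$.

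Arguing by contradiction, I would assume $f \in OM_+^1\backslash\{1, t\}$ preserves $\sigma_\Phi$. Since $\alpha \in (0,1)$ and $r \in (-1, 1)$, the function $\Phi = \Phi_{(r,\alpha)}$ also lies in $OM_+^1\backslash\{1, t\}$. First I would treat $r \in (-1, 0]$: a direct evaluation shows $\Phi(0) = 0$ (when $r = 0$, $\Phi(t) = t^\alpha$; when $r \in (-1, 0)$, the base $\alpha t^r + 1 - \alpha$ diverges as $t \to 0^+$ while the exponent $1/r$ is negative). Hence Theorem \ref{equation main theorem} applies and yields $\Phi = f_{!_\gamma}$ for some $\gamma \in (0, 1)$. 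For the overlapping range $r \in [0, 1)$, the routine identity $\Phi_{(r,\alpha)}^* = \Phi_{(-r,\alpha)}$ gives $\Phi^*(0) = 0$, so Corollary \ref{equation:arithmetic} yields $\Phi = f_{\nabla_\gamma}$. Since $(-1, 0] \cup [0, 1) = (-1, 1)$ covers the full hypothesis range, one of these two identifications must hold.

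Both are then ruled out by a short calculation at $t = 1$. A direct computation gives $\Phi_{(r,\alpha)}'(1) = \alpha$ and $\Phi_{(r,\alpha)}''(1) = -\alpha(1-\alpha)(1-r)$, while $f_{!_\gamma}''(1) = -2\gamma(1-\gamma)$ and $f_{\nabla_\gamma}''(1) = 0$. Matching first derivatives forces $\gamma = \alpha$, and matching second derivatives then forces $r = -1$ in the harmonic case and $r = 1$ in the arithmetic case, both excluded from $(-1, 1)$. This contradicts the previous step, so no such non-trivial $f$ exists. The only conceptual step is recognizing that the two previously established characterizations fit together to cover the entire parameter range; the derivative check that isolates $r \in \{-1, 1\}$ is routine, so there is no real obstacle beyond bookkeeping.
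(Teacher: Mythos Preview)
Your argument is correct and, in fact, cleaner than the paper's. The paper first reduces to $r\in(-1,0]$ via the duality $\Phi_{(r,\alpha)}^*=\Phi_{(-r,\alpha)}$, then invokes the scalar characterization (Corollary~\ref{formula equation r not 0}, which rests on Proposition~\ref{basic prop}) to pin down $f=\Phi_{(r,\beta)}$, and only afterwards applies Theorem~\ref{equation main theorem} to force $f=f_{!_a}$; the contradiction is then extracted from the strict concavity of $t\mapsto t^{-r}$, comparing the two expressions for $f$. You bypass the scalar machinery entirely: you read off directly from Theorem~\ref{equation main theorem} and Corollary~\ref{equation:arithmetic} the conclusion about $\Phi$ (rather than about $f$), and then kill both possibilities with the second-derivative computation $\Phi_{(r,\alpha)}''(1)=-\alpha(1-\alpha)(1-r)$, which isolates $r\in\{-1,1\}$. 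This is shorter and makes Proposition~\ref{basic prop} and Corollaries~\ref{formula equation r=0}--\ref{formula equation r not 0} unnecessary for the present statement, though of course those results retain independent interest as scalar classifications. One cosmetic point: your two cases $r\in(-1,0]$ and $r\in[0,1)$ overlap at $r=0$, where both theorems apply and both lead to the same contradiction, so there is no issue, but you might simply write $r\in(-1,0]$ and $r\in(0,1)$ to avoid the redundancy.
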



\begin{proof}
(I) $\Rightarrow$ (II). 
It is clear that (I) is equivalent to 
$$f^*(A\sigma_{\Phi^*}B)=f^*(A)\sigma_{\Phi^*}f^*(B).$$
On account of the relation $\Phi^* =\Phi_{(-r,\alpha)}$, 
it is sufficient to show Proposition for the case $r\in (-1,0]$. 

First, we consider the case $r\in (-1,0)$. By Corollary \ref{formula equation r not 0}, the function $f$ is written as $f = \Phi_{(r,\beta)}$ for some $\beta\in [0,1]$. Now, assume on the contrary that $f$ is non-trivial, i.e., $\beta\in (0,1)$. Then, it follows 
from $\Phi(0)=0$ and Theorem \ref{equation main theorem}  
that $f=\Phi_{(r,\beta)}=f_{!_a}$ for some $a\in (0,1)$. By a simple calculation, we have 
$$\left\{\Phi_{(r,\beta)}\left( {1\over t}\right)\right\}^r = 
\beta t^{-r}+(1-\beta )=(a t+(1-a))^{-r}
=
\left\{f_{!_a}\left( {1\over t}\right)\right\}^r,$$
and
$$\beta={d\over {dt}}\Phi_{(r,\beta)}(t)~\Big|_{t=1}
={d\over {dt}}(1~{!_a}~t)~\Big|_{t=1}=a.
$$
This result is a contradiction to the strict concavity of the function $t\mapsto t^{-r}$. Therefore, $f$ is trivial.

When $r=0$, by Theorem \ref{equation main theorem} and Corollary \ref{formula equation r=0}, 
the fact that 
$\Phi$ is not the weighted harmonic mean implies that $f$ is trivial. 

The implication (II) $\Rightarrow$ (I) is obvious.
\end{proof}
When $r\in \{-1,1\}$, a function $f$ satisfying 
$$f(A \sigma_{\Phi_{(r,\alpha)}} B) = f(A) \sigma_{\Phi_{(r,\alpha)}} f(B)\quad (A,B>0)$$
is not always trivial. 

In conclusion, we state the following proposition that follows from Theorem \ref{equation main theorem} and Corollary \ref{equation:arithmetic}.  
\begin{proposition}
Let $r\in \{-1,1\}$, $\alpha \in (0,1)$ 
and let $f \in OM_+^1$.
If $\Phi=\Phi_{(r,\alpha)}$, the following are equivalent: 
\begin{itemize}
\item[(I)] $f(A \sigma_{\Phi} B) = f(A) \sigma_{\Phi} f(B)$ for all $A,B>0$;
\item[(II)] $f=\Phi_{(r,\beta)}$ for some $\beta\in[0,1]$. 
\end{itemize}
\end{proposition}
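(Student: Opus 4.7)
The plan is to reduce the statement to the previously established characterizations, namely Theorem \ref{equation main theorem} (for $r=-1$) and Corollary \ref{equation:arithmetic} (for $r=1$), after handling the trivial cases by direct verification.

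The implication (II) $\Rightarrow$ (I) is routine: if $f=\Phi_{(r,\beta)}$, this is either a weighted harmonic representing function (when $r=-1$) or a weighted arithmetic representing function (when $r=1$), and in both cases the identity $f(A\sigma_\Phi B)=f(A)\sigma_\Phi f(B)$ follows from standard properties of these means (and is also covered by Proposition \ref{basic prop} applied to the respective bijective generators $g$; or can simply be computed on scalars using the operator mean transformer rule). I would record this as a one-line observation.

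For (I) $\Rightarrow$ (II) I would split on the value of $r$. When $r=-1$, one has $\Phi=f_{!_\alpha}$, so $\Phi(0)=0$, and $f,\Phi\in OM_+^1$. If $f$ is trivial, then $f(t)=1=\Phi_{(-1,0)}(t)$ or $f(t)=t=\Phi_{(-1,1)}(t)$, both with $\beta\in\{0,1\}$. Otherwise $f,\Phi\in OM_+^1\setminus\{1,t\}$ and Theorem \ref{equation main theorem} applies directly, giving $f=f_{!_\beta}=\Phi_{(-1,\beta)}$ for some $\beta\in(0,1)$. When $r=1$, one has $\Phi=f_{\nabla_\alpha}$, so $\Phi^*=f_{!_\alpha}$ and $\Phi^*(0)=0$. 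The same trivial/non-trivial dichotomy works: the trivial cases match $\beta\in\{0,1\}$ of $\Phi_{(1,\beta)}$, and in the non-trivial case Corollary \ref{equation:arithmetic} yields $f=f_{\nabla_\beta}=\Phi_{(1,\beta)}$ for some $\beta\in(0,1)$.

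I do not anticipate a serious obstacle; the only points that need care are first to check that the endpoint cases $\beta\in\{0,1\}$ are correctly identified with the trivial representing functions $1$ and $t$ (so that the conclusion of (II) captures the trivial $f$'s that Theorem \ref{equation main theorem} and Corollary \ref{equation:arithmetic} explicitly exclude), and second to make sure that the hypotheses $\Phi(0)=0$ (resp.\ $\Phi^*(0)=0$) and $\Phi\notin\{1,t\}$ required by the cited results hold for $\Phi_{(\pm1,\alpha)}$ with $\alpha\in(0,1)$, which is immediate from the explicit formulas. Beyond these bookkeeping checks the proof is a direct appeal to the earlier theorems.
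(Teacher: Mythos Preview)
Your proposal is correct and matches the paper's approach exactly: the paper offers no detailed proof but simply states that the proposition follows from Theorem~\ref{equation main theorem} and Corollary~\ref{equation:arithmetic}, which is precisely the reduction you carry out (with the added bookkeeping for the trivial endpoint cases $\beta\in\{0,1\}$). One small remark: your parenthetical appeal to Proposition~\ref{basic prop} for (II)$\Rightarrow$(I) only covers scalars, so the operator identity should be justified either by the direct computation you allude to or, more simply, by noting that (II)$\Rightarrow$(I) is already the ``obvious'' direction in Theorem~\ref{equation main theorem} and Corollary~\ref{equation:arithmetic}.
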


\leftline{{\it Acknowledgement.}}

The research of the second author is partially supported by JSPS KAKENHI Grant Number JP17K05285.


\end{document}